\pgfplotsset{compat=1.18} 
\newtheorem{theorem}{Theorem}
\newtheorem{proposition}[theorem]{Proposition}%
\newtheorem{lemma}{Lemma}%
\newtheorem{definition}{Definition}%
\newcommand{\Syssymb}{\Sigma}
\newcommand{\V}{\mathbb{V}}
\newcommand{\ROMnota}[1]{#1_{red}}
\newcommand{\ROMnotaZero}[1]{#1_{red,0}}
\newcommand{\SoBmat}{\Gamma}
\newcommand{\transpose}{\mathsf{T}}
\newcommand{\ddt}{\frac{d}{dt}}
\newcommand{\Linv}{-\mathsf{L}}
\newcommand{\portsize}{p}
\newcommand{\FOMsize}{n}
\newcommand{\ROMsize}{k}
\newcommand{\fresize}{M}
\newcommand{\pH}{\textsf{pH}\xspace}
\newcommand{\FOM}{\textsf{FOM}\xspace}
\newcommand{\ROM}{\textsf{ROM}\xspace}
\newcommand{\ROMs}{\textsf{ROM}s\xspace}
\newcommand{\MOR}{\textsf{MOR}\xspace}
\newcommand{\LTI}{\textsf{LTI}\xspace}
\newcommand{\MSD}{\textsf{MSD}\xspace}
\newcommand{\LRCR}{\textsf{LRCR}\xspace}
\newcommand{\SO}{\textsf{SO}\xspace}
\newcommand{\MarkSize}{3pt}
\newcommand{\PlotLineWidth}{1pt}
\newcommand{\cootrans}[1]{\hat{#1}}
\renewcommand{\email}[2][]{%
	\ifx\emails\@empty\relax\else{\g@addto@macro\emails{,\space}}\fi%
	\@ifnotempty{#1}{\g@addto@macro\emails{\textrm{(#1)}\space}}%
	\g@addto@macro\emails{#2}%
}
\title[Symplectic Model Order Reduction of port-Hamiltonian Systems]{Symplectic Model Order Reduction of port-Hamiltonian Systems}
\author[S.~Glas \and M.~Mamunuzzaman \and H.Mu \and H.~Zwart]{Silke Glas $^{\dagger}$
	\and Mir Mamunuzzaman$^{\dagger}$
	\and Hongliang Mu $^{\dagger}$
	\and Hans Zwart$^{\dagger,\ddagger}$}
\address{${}^{\dagger}$ Department of Applied Mathematics, University of Twente, P.O. Box 217, 7500 AE Enschede, The Netherlands\\
(e-mail: {s.m.glas@utwente.nl,
	mirmamunuzzaman@gmail.com,
	h.l.mu@utwente.nl,
	h.j.zwart@utwente.nl
	})
}
\address{${}^{\ddagger}$ Department of Mechanical Engineering, Eindhoven University of Technology, P.O. Box 513, 5600 MB Eindhoven, The Netherlands\\
(e-mail: {h.j.zwart@tue.nl
})
}
\thanks{HZ and MM have received funding from the European Union's Horizon 2020 research and innovation program under the Marie Sklodowska-Curie grant agreement No.\ 765579.}
\begin{document}
	\maketitle
	
	\begin{abstract}
		This work proposes a novel structure-preserving model order reduction (\MOR) method for linear, time-invariant port-Hamiltonian (\pH) systems. Our goal is to construct a reduced order \pH system, which can still be interpreted in the physical domain of the full order model. By this we mean, that if an electrical circuit is the initial high-dimensional \pH system, we want the reduced order model to be still interpretable as an electronic circuit. In the case of the well-known mass spring damper (\MSD) system, there are \MOR methods available, which already guarantee the preservation of this particular structure. Moreover, we show that our new structure-preserving \MOR method, which is based on symplectic \MOR methods, will recover the known second-order Arnoldi method in the case of  \MSD systems. However, for the example of an electrical circuit \pH model (and more models of similar block structure), our method yields a novel model reduction method. We present numerical results on the aforementioned electronic circuit model, highlighting the advantages of the proposed method.
	\end{abstract}
	
	{\small 
		{\bf Keywords:} Model order reduction, port-Hamiltonian systems, symplectic \MOR, moment matching \MOR methods.
		
		{\bf AMS-classification:} 34C20, 65P10, 70H33, 93B10, 93B1, 93C05.
	}

	\section{Introduction}
	\label{sec:intro} 
	
	Port-Hamiltonian (\pH) modelling is a generalisation of the classical Hamiltonian framework. Among many other reasons, the \pH framework is very useful in engineering problems since it combines the classical Hamiltonian approach with a powerful framework for modelling and simulation of many classes of open physical systems, e.g., network modelling or electronic circuits.
	
	In this paper, we consider structure-preserving model order reduction (\MOR) for such linear time-invariant \pH systems. These are systems of the following form\footnote{We formulate our systems in the complex domain, since even though many systems have real parameters, we interpolate at complex values.} \footnote{Since port-Hamiltonian systems often are of even state dimension, we assume this from the start.}
	\begin{equation}
		\label{equ:pH_sys}
		\dot{x}(t) = (J - R){\mathcal H} x(t) + B u(t),\quad y(t) = B^{*} {\mathcal H} x(t),
	\end{equation}
	where $x(t) \in {\mathbb C}^{2 \FOMsize},u(t) \in {\mathbb C}^{\portsize}$, $y(t) \in {\mathbb C}^{\portsize}$ denote the state, input, and output respectively, and the matrices satisfy $-J=J^{*} \in {\mathbb C}^{2\FOMsize \times 2 \FOMsize}$, 
	$0\leq R=R^{*} \in {\mathbb C}^{2\FOMsize \times 2\FOMsize}$, $B \in {\mathbb C}^{2\FOMsize \times \portsize}$, and $0 < {\mathcal H}={\mathcal H}^{*} \in {\mathbb C}^{2 \FOMsize \times 2\FOMsize}$. 
	The aim is to find a system of the same form, but with a smaller state space dimension, such that the two transfer functions from the full-order and reduced-order systems are equal at a prescribed set of frequency points.
	
	In many control design methods, like LQ and $H^{\infty}$, the controller will be of the same order as the original system, which motivates the need for a reduced order model (\ROM) capturing the essential behaviour of the original system.
	Model order reduction is a well-established field with a long history, \cite{OptimalHankelNorm1984}, \cite{MoorePCA}, but \MOR methods for \pH-systems are more recent, see e.g.\ the overview paper \cite{Anto05}. Although the term \pH-system is not explicitly used in the latter overview paper, since many \pH-systems have a positive real transfer function, the results presented there can be used for \pH-systems. 
	Although the first results on \MOR of \pH-systems/passive systems goes back almost two decades, recently there has been many new approaches to these questions. For example, moment matching \MOR of \pH-systems is done in \cite{Polyuga_MOR_PHS} and \cite{StructurePreserving_PHS}. Approximation of the underlying Dirac structure is the approach taken in  \cite{EffortFlowConstraint_PHS}.
	In \cite{breiten2021passivity}, the authors use the spectral factorisation of the Popov function, i.e., $G(s)+G(-s)^*$ with $G(s)$ being the transfer function of (\ref{equ:pH_sys}), and the singular values of the solutions to the Lur'e equations to construct a reduced \pH model.  In \cite{BrSc25} the authors design a reduced model based on LQG balancing. Further, in \cite{SOBMOR}, the \MOR methods which is based on parameter optimization to retain a reduced \pH system. The work \cite{sato2018riemannian} is based on Riemannian optimization and \cite{ionescu2013moment} is a moment matching method for nonlinear \pH systems. 
	
	As a special instance of \pH systems, there are Hamiltonian systems with no dissipation and no input/output term. For this special class of systems, it is important to preserve the underlying symplectic structure in the reduced order model. Known/recent works for this are by \cite{Symplectic, AfkH17} for linear symplectic reduced order models, \cite{BGH_symplectic, sharma2023symplectic} for reduced order models with nonlinear approximation methods and operator inference/system identification in \cite{ShaWK22}.	
	In this article in particular, we combine the techniques of \cite{Symplectic} with the transfer function interpolation of \cite{BookMOR} to derive a novel \MOR method, extending the class of \pH systems it can be applied to. 
	
	Although the transfer function of our reduced model equals the transfer function of the original system in a prescribed set of frequency points, we will not use the Loewner basis,  \cite{NewPassivityMOR}, but use an approach much closer to the Krylov-based \MOR, \cite{IRKA2005}, \cite{RKIA2006}, \cite{Gugercin2012IRKA}, \cite{PassivityPreserving_LPHS}. The $H^2$-optimality conditions for this approach are given in \cite{H2MOR_LargeSystem}.
	
	With this novel \MOR method based on symplectic \MOR, we can address structure-preservation for a special class of \pH systems: When reformulating a physical model into \pH form \eqref{equ:pH_sys} (if it exists), we often observe that the matrices $J, R$ and ${\mathcal H}$ naturally obtain a special block structure, e.g., 
	\begin{equation}
		\label{eq:pH-block_FOM}
		\begin{array}{rcl}
			\frac{\mathsf{d}}{\mathsf{d} t} 
			\begin{bmatrix} x_1(t) \\ x_2(t) \end{bmatrix} 
			&=&\ 
			\left(\begin{bmatrix} 0_{\FOMsize} & \tilde{J} \\ -\tilde{J}^* & 0_{\FOMsize}  \end{bmatrix}
			- \begin{bmatrix} R_1 & 0_{\FOMsize}\\ 0_{\FOMsize} &R_2 \end{bmatrix}\right)
			\begin{bmatrix} {\mathcal H}_1 & 0_{\FOMsize}\\ 0_{\FOMsize} &{\mathcal H}_2 \end{bmatrix}
			\begin{bmatrix} x_1(t) \\ x_2(t) \end{bmatrix} 
			+ \begin{bmatrix} B_1 \\ B_2 \end{bmatrix}u(t) ,\\
			y(t) &=&\ \begin{bmatrix} B_1^* & B_2^* \end{bmatrix}
			\begin{bmatrix} {\mathcal H}_1 & 0_{\FOMsize}\\ 0_{\FOMsize} &{\mathcal H}_2 \end{bmatrix}
			\begin{bmatrix} x_1(t) \\ x_2(t) \end{bmatrix} ,
		\end{array}
	\end{equation}
	where $\tilde{J}\in{\mathbb C}^{\FOMsize\times\FOMsize}$, $0\leq R_1=R_1^* \in {\mathbb C}^{\FOMsize\times \FOMsize}$, $0\leq R_2=R_2^* \in {\mathbb C}^{\FOMsize\times \FOMsize}$, $0< H_1=H_1^* \in {\mathbb C}^{\FOMsize\times \FOMsize}$, $0< H_2=H_2^* \in {\mathbb C}^{\FOMsize\times \FOMsize}$, $B_1, B_2 \in {\mathbb C}^{\FOMsize\times \ROMsize}$ are inherited from \eqref{equ:pH_sys}, and $0_{\FOMsize} \in {\mathbb C}^{\FOMsize \times \FOMsize}$ denotes the zero matrix.   
	In our reduced-order model (\ROM), we want to keep this block structure as well such that we can guarantee, that the \ROM can still be interpreted as a physical system of the same type as the full order model (\FOM). This results in a novel \MOR approach, which we call \emph{symplectic-\pH}. To the best of our knowledge, such an approach for the class of systems \eqref{eq:pH-block_FOM} has not been introduced, unless special cases as the e.g., mass spring damper system, with $R_1=0_{\FOMsize}$, $B_1=0_{\FOMsize\times\portsize}$ and $\tilde{J}$ being the identity matrix, are considered. 
	
	In particular, this article provides the following contributions: 
	\begin{enumerate}
		\item We introduce symplectic-\pH, a novel \MOR method of \pH systems, based on symplectic \MOR, of block structured form \eqref{eq:pH-block_FOM}, such that the \ROM results in a \pH system of the same form. Moreover, we show that the transfer function of the \FOM and the \ROM are equal at a prescribed set of frequencies (Section \ref{sec:symp_MOR_pH}). 
		\item In the special case of \eqref{eq:pH-block_FOM} being in the form of a \MSD system, we prove that the introduced method recovers the second-order Arnoldi method (\SO-Arnoldi) published in \cite{Bai2005} (Section \ref{sec:MSD}). This reveals, that the \SO-Arnoldi method preserves more structure than previously known.     
		\item Numerical experiments: We present numerical results for an electronic circuit, more specifically an \LRCR circuit. We compare our method to other known methods in the literature and further observe, that the \SO-Arnoldi method yields unstable results in the \ROM (Section \ref{sec:RLC}). 
	\end{enumerate}
	
	The outline of this article is as follows. In Section \ref{Sec:prelim}, we provide some background and results on \MOR of general linear time-invariant systems and \pH systems. Section \ref{sec:symplecticMOR} shows how the underlying symplectic structure can be conserved when performing model reduction. Furthermore, there we introduce and prove that the newly introduced \MOR method keeps the additional block structure in the system matrices. In Section \ref{sec:MSD}, we consider the special case of \MSD systems and prove that our method gives the same reduced system as the \SO-Arnoldi method. For the numerical results in Section \ref{sec:RLC}, we consider an electrical circuit model and show that our method preserves the structure, while the \SO-Arnoldi method may yield  unstable results. 
	
	\section{Preliminaries and problem setting} \label{Sec:prelim}
	
	In this section we summarize the fundamentals of \MOR of finite-dimen\-sional linear time-invariant (\LTI) systems and review the conventional approach to \MOR for this system class in Section \ref{sec:MOR_projection}. Modelling of \pH systems is stated in Section \ref{revPH} and \MOR of \pH systems is recalled in Section \ref{sec:MOR_pH_sys}.
	
	\subsection{Model order reduction by projection}\label{sec:MOR_projection}
	Consider the \LTI system described by the standard state-space model
	\begin{equation}
		\label{eq:FOM}
		{\Syssymb}
		\ :\ \ \ \left\{
		\begin{array}{ll}
			\dot{x}(t) &= A x(t) + B u(t);\ x(0) = x_0;\\
			y(t) &= C x(t) + D u(t),
		\end{array}
		\right. 
	\end{equation}
	where $x(t) \in  {\mathbb C}^{2\FOMsize}$ is the state variable, $ x_0 \in {\mathbb C}^{2\FOMsize}$ is the initial condition and $u(t) \in {\mathbb C}^{\portsize}$ and $y(t) \in {\mathbb C}^{\portsize}$ are the inputs and outputs of the system, respectively. The matrices $A \in {\mathbb C}^{2\FOMsize \times 2\FOMsize}, B \in {\mathbb C}^{2\FOMsize \times \portsize}, C \in {\mathbb C}^{\portsize \times 2\FOMsize}, D \in {\mathbb C}^{\portsize \times \portsize}$ are constant matrices and are called the system matrix, the input matrix, the output matrix, and the direct (input-output) matrix, respectively. The transfer function associated to the system (\ref{eq:FOM}) is given by
	\begin{equation}{\label{eq:TF_FOM}}
		G(s) = C(s I_{2\FOMsize} - A)^{-1}B + D
	\end{equation}
	which is a $p \times p-$matrix-valued rational function, describing the input-output mapping in the frequency domain. 
	
	The \MOR problem can be described as to find a reduced-order \LTI system of the similar form as (\ref{eq:FOM}), i.e., 
	\begin{equation}
		\label{eq:ROM}
		\ROMnota{\Syssymb}\ :\ \ \ \left\{
		\begin{array}{ll}
			\ROMnota{\dot{x}}(t) = \ROMnota{A} \ROMnota{x}(t) + \ROMnota{B}u(t);\ \ROMnota{x}(0) = x_{red,0};\\
			\ROMnota{y}(t) = \ROMnota{C} \ROMnota{x}(t) + \ROMnota{D}u(t),
		\end{array}
		\right.
	\end{equation}
	where $\ROMnota{x}(t) \in {\mathbb C}^{r}$ is the state variable of the reduced system with $\ROMnotaZero{x}$ being the initial condition, dimension $r \in \mathbb{N}$, $r \ll n$, such that for all possible inputs $u(t) \in {\mathbb C}^{p}$ the reduced-order model's output $\ROMnota{y}(t) \in {\mathbb C}^{p}$ approximates the full-order model's output $y(t)$ closely. Here, the dimensions of the reduced matrices are given by $\ROMnota{A} \in {\mathbb C}^{r \times r}, \ROMnota{B} \in {\mathbb C}^{r \times p}, \ROMnota{C} \in {\mathbb C}^{p \times r} $ and $\ROMnota{D}=D \in {\mathbb C}^{p \times p}$. 
	Moreover, the transfer function of the reduced-model
	\begin{equation}{\label{eq:TF_ROM}}
		\ROMnota{G}(s) = \ROMnota{C}(s I_r - \ROMnota{A})^{-1}\ROMnota{B} + \ROMnota{D}
	\end{equation}
	should behave similarly to that of the \FOM.
	
	When the given \FOM $\Syssymb$ is defined via the state-space realizations $A, B,C$, and $D$ as in  (\ref{eq:FOM}), one well-known strategy is to construct the \ROM $\ROMnota{\Syssymb}$, see (\ref{eq:ROM}), via {\em Petrov-Galerkin projection}, see e.g.\ \cite[Chapter 3]{AnBG20}. In this technique, we need to choose two $r$-dimensional subspaces ${\mathcal V}_r\subset {\mathbb C}^{2\FOMsize}$ and ${\mathcal W}_r \subset {\mathbb C}^{2\FOMsize}$ associated with two basis matrices $V, W \in {\mathbb C}^{2\FOMsize \times r}$ such that ${\mathcal V}_r = \text{range}(V)$ and ${\mathcal W}_r = \text{range}(W)$. Then the full-order state is approximated as $x(t) \approx V\ROMnota{x}(t)$ and the residual is constrained to be orthonormal to $W$. We obtain the $r-$dimensional reduced model (\ref{eq:ROM}) with 
	\begin{equation}
		\label{eq:ROM_realization}
		\begin{array}{ll}
			\ROMnota{A} = W^{*}AV,\ \ROMnota{B} = W^{*}B,\ \ROMnota{C}  = CV,\ \ROMnota{D} = D,\ \ROMnotaZero{x} = W^{*}x_0.
		\end{array}
	\end{equation}
	The choice of the subspaces determines the accuracy and effectiveness of the reduced system. Among the various techniques, {\em interpolatory Petrov-Galerkin projection} \cite[Section 3.3]{AnBG20} method is a very effective and, hence, a popular approach for \MOR. This method selects two subspaces ${\mathcal V}_r, {\mathcal W}_r$ to produce the reduced-order model such that its transfer function (\ref{eq:TF_ROM}) interpolates the full-order system's transfer function (\ref{eq:TF_FOM}) at selected interpolation points. Typically, interpolatory projections include Krylov-type subspaces in ${\mathcal V}$ and/or ${\mathcal W}$ to satisfy the interpolation conditions. A more general type subspace, namely rational Krylov subspace \cite[Chapter 11]{BookMOR}, can be used to get a more numerically efficient approach. 
	
	We are particularly interested in {\em rational tangential interpolation}, an extension of the interpolation framework to MIMO systems by \cite{GaVD04}, which refers to interpolation onto some tangent directions. Formally, the rational tangential interpolation problem seeks a reduced system, $\ROMnota{G}(s)$, that interpolates the given full-order system (\ref{eq:TF_FOM}) at selected points $\{s_m\}_{i = 1}^{\fresize} \subset {\mathbb C}$ along selected right tangent directions $\{\mu_m\}_{i = 1}^{\fresize} \subset {\mathbb C}^{p}$, i.e., 
	\begin{equation}{\label{eqn:InterpolationCondition}}
		G(s_m)\mu_m = \ROMnota{G}(s_m)\mu_m\quad \text{for}\quad m = 1,\dots, \fresize.
	\end{equation}
	
	Conditions forcing (\ref{eqn:InterpolationCondition}) to be satisfied by a reduced system of the form (\ref{eq:ROM}) are provided by Theorem \ref{thm:IRKA}, which is an extension of the Proposition 11.7 in \cite{BookMOR} from real systems to complex systems.\footnote{The proof remains unchanged and can be found in \cite{BookMOR}.}
	\begin{theorem}
		\label{thm:IRKA}
		Suppose $G(s) = C(s I_{2\FOMsize} - A)^{-1}B + D$. Given a set of distinct interpolation points $\{s_m\}_{m= 1}^{\fresize} \subset {\mathbb C}$ and right tangent directions $\{\mu_m\}_{m = 1}^{\fresize} \subset {\mathbb C}^{p}$, define $V \in {\mathbb C}^{2\FOMsize \times \fresize}$ as 
		\begin{equation*}
			V= \begin{bmatrix} (s_1 I_{2n} - A)^{-1}B \mu_1, \dots, (s_k I_{2n} - A)^{-1}B \mu_{\fresize}\end{bmatrix}.
		\end{equation*}
		Then for any $W \in {\mathbb C}^{2\FOMsize \times \fresize}$ satisfying $W^{*}V = I_{\fresize}$, the reduced systems $\ROMnota{G}(s) = \ROMnota{C}(s I_{\fresize}- \ROMnota{A})^{-1}\ROMnota{B} + \ROMnota{D}$ defined via (\ref{eq:ROM_realization}) satisfies (\ref{eqn:InterpolationCondition}), provided that $s_m I_{2\FOMsize} - A$ and $s_m I_{\fresize} - \ROMnota{A}$ are invertible. 
	\end{theorem}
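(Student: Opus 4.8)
\medskip
\noindent\emph{Proof proposal.}\quad The plan is to identify the columns of $V$ as the exact ``interpolating states'' and to show that the projected resolvent reproduces them verbatim. For $m=1,\dots,M$ (so that $M=k$) write $v_m=(s_mI_n-A)^{-1}Bu_m$ for the $m$-th column of $V$, and let $e_m\in\mathbb{C}^{k}$ be the $m$-th standard basis vector, so that $Ve_m=v_m$. I take $W$ in the normalisation $W^{T}V=I_k$ that is implicitly attached to the realisation (\ref{eq:ROM_realization}); if $W^{T}V$ is merely invertible one first replaces $W$ by $W(W^{T}V)^{-T}$, which does not change $\mathrm{ran}(W)$. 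With this in hand the whole statement reduces to the single identity
\[
(s_mI_k-A_r)^{-1}B_ru_m=e_m,\qquad m=1,\dots,M,
\]
because then $G_r(s_m)u_m=C_re_m+D_ru_m=CVe_m+Du_m=Cv_m+Du_m=C(s_mI_n-A)^{-1}Bu_m+Du_m=G(s_m)u_m$, which is exactly (\ref{eq:InterpolationCondition}).

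To prove that identity I would clear the resolvent in the definition of $v_m$, giving $(s_mI_n-A)v_m=Bu_m$, and then left-multiply by $W^{T}$, so that $s_mW^{T}v_m-W^{T}Av_m=W^{T}Bu_m$. Using $v_m=Ve_m$ together with $W^{T}V=I_k$, $A_r=W^{T}AV$ and $B_r=W^{T}B$, the left-hand side becomes $s_me_m-A_re_m=(s_mI_k-A_r)e_m$ and the right-hand side becomes $B_ru_m$. Since $s_mI_k-A_r$ is assumed invertible, inverting it yields the claim.

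I do not expect a genuine obstacle here: once the right object --- the reduced state $(s_mI_k-A_r)^{-1}B_ru_m$ --- is written down, the argument is a few lines of linear algebra. The points that need care are (i) the bookkeeping for the oblique projector $VW^{T}$, in particular that the realisation (\ref{eq:ROM_realization}) silently carries the normalisation $W^{T}V=I_k$, which is precisely what makes $W^{T}v_m=e_m$; (ii) tracking where the two invertibility hypotheses are used --- $s_iI_n-A$ to define the columns of $V$, and $s_iI_k-A_r$ to invert inside $G_r$; and (iii) noting that $D_r=D$ is exactly what lets the feed-through terms cancel. No condition on $W$ beyond invertibility of $W^{T}V$ enters, which is the meaning of ``for any $W$'' in the statement.
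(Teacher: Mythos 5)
Your proof is correct, and it is the standard argument: clear the resolvent in the definition of the $m$-th column $v_m=(s_mI_n-A)^{-1}Bu_m$ of $V$, left-multiply by $W^{\mathsf T}$, and identify the result as the reduced resolvent equation, so that the reduced state at $s_m$ is exactly $e_m$ and the outputs coincide. The paper itself gives no proof of Theorem \ref{thm:IRKA} (it is quoted from the literature with a terminal $\Box$), but your argument is precisely the mechanism the authors reuse in their own Theorems \ref{thm:PHS_NoDissipation} and \ref{th:pH_with_Dissipation}, where the role of $W^{\mathsf T}$ is played by the symplectic left inverse $Q^{-\mathsf L}$ and the normalisation $W^{\mathsf T}V=I$ is replaced by $Q^{-\mathsf L}Q=I$ together with the range condition $x_m\in\mathrm{ran}(Q)$. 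One point you raise deserves emphasis: as literally transcribed, with (\ref{eq:ROM_realization}) lacking the factor $(W^{\mathsf T}V)^{-1}$, the claim ``for any $W$'' is false --- e.g.\ $n=k=1$, $A=0$, $B=C=1$, $D=0$, $s_1=u_1=1$ gives $V=1$, and $W=2$ yields $G_r(1)=2\neq 1=G(1)$ --- so the normalisation $W^{\mathsf T}V=I_k$ (or the descriptor form with $E_r=W^{\mathsf T}V$) that you make explicit is not a cosmetic remark but a necessary hypothesis; your repair $W\mapsto W(W^{\mathsf T}V)^{-\mathsf T}$ is the right one and matches the source reference.
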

	
	\subsection{Review of the port-Hamiltonian representation}
	\label{revPH}
	In this section, we provide a brief introduction to \pH modelling, for details see e.g., \cite{Geoplex}, \cite{jacob2012linear}, \cite{schaft1992}, and \cite{IntroSurvey}. We start by recalling the {\em classical Hamiltonian equations} of motion.
	
	The standard Hamiltonian equations for a mechanical system are defined by 
	\begin{equation}{\label{eqn:ClassicalHamiltonianSystem}}
		\dot{q}_i(t) = \frac{\partial H}{\partial p_i}(q(t),p(t)), \qquad \dot{p}_i(t) = -\frac{\partial H}{\partial q_i}(q(t),p(t)),
	\end{equation}
	for $i = 1,\dots, \FOMsize$, where $H$ is the Hamiltonian of the system, that corresponds for instance to its total energy, $q = 
	\left[ q_1, q_2, \dots, q_{\FOMsize} 
	\right]^{\top}\in {\mathbb C}^{\FOMsize}$ and $p = \left[ p_1, p_2, \dots, p_{\FOMsize} \right]^{\top} \in {\mathbb C}^{\FOMsize}$ are (generalized) position states and momentum states, respectively, and $(q_i,p_i)$-space refers to the generalized phase space of the system. One can easily check that the Hamiltonian is constant along solutions, i.e., $\frac{\mathsf{d}}{\mathsf{d}t}H(q(t),p(t)) = 0$ expressing that the Hamiltonian/energy is conserved within the system.
	
	Now, if we define the state variable 
	$x = \left[ q_1, q_2, \dots, q_{\FOMsize}, p_1, p_2, \dots, p_{\FOMsize} \right]^{\transpose} \in {\mathbb C}^{2\FOMsize}$ 
	for the phase space, then (\ref{eqn:ClassicalHamiltonianSystem}) takes the form
	\begin{align*}
		\dot{x}(t) = {\mathbb J}_{2\FOMsize}\frac{\partial H}{\partial x}(x(t)),
	\end{align*}
	where $\frac{\partial H}{\partial x}(x)$ is the gradient of the Hamiltonian function $H$ and the matrix $\mathbb{J}_{2\FOMsize} \in {\mathbb C}^{2\FOMsize \times 2\FOMsize}$, which is defined by
	\begin{equation}{\label{eq:PossionMatrix}}
		{\mathbb J}_{2\FOMsize} = \begin{bmatrix} 0_{\FOMsize} & I_{\FOMsize}\\-I_{\FOMsize} & 0_{\FOMsize} \end{bmatrix},
	\end{equation}
	is the well-known {\em Poisson matrix}, with $I_{\FOMsize} \in {\mathbb C}^{\FOMsize \times \FOMsize}$ denoting the identity matrix.
	
	Adding external forces, the mechanical system (\ref{eqn:ClassicalHamiltonianSystem}) becomes 
	\begin{align*}
		\dot{q}_i(t) = \frac{\partial H}{\partial p_i}(q(t),p(t)), \qquad \dot{p}_i(t) = -\frac{\partial H}{\partial q_i}(q(t),p(t)) + F_i(t),
	\end{align*}
	where the input $F=\left[ F_1,F_2, \cdots, F_{\FOMsize} \right]^{\transpose}$ is the vector of generalised forces. 
	One can immediately derive the following balance equation: 
	\begin{equation*}
		\frac{dH}{dt}(t) = \frac{\partial H}{\partial q}(q)(t),p(t))^{*}\dot{q}(t) + \frac{\partial H}{\partial p}(q(t),p(t))^{*}\dot{p}(t) = \dot{q}^{*}(t)F(t),
	\end{equation*}
	expressing the balance between internal and external power. This also motivates to define the output of the system as $y(t) = \dot{q}(t)$, and thus the power balance becomes $\frac{\mathsf{d}H}{\mathsf{d}t}(t) = y^{*}(t) F(t)$. With $B=\left[\begin{smallmatrix} 0_n \\ I_n \end{smallmatrix}\right]$, we formulate such system as  
	\begin{equation}
		\label{equ:IRPH_sys}
		\begin{array}{l}
			\dot{x}(t) = J_{2\FOMsize}\frac{\partial H}{\partial x}(x(t)) + B u(t);\quad
			x(0) = x_0;
			\\[1ex]
			y(t) = B^{*}\frac{\partial H}{\partial x}(x(t)),
		\end{array}
	\end{equation}
	where $u=F$. 
	The above \eqref{equ:IRPH_sys} is called a port-Hamiltonian (\pH) system. 
	
	Since we will consider linear systems, we assume that $H$ is quadratic, i.e., $H(x) = \frac{1}{2}x^{*}{{\mathcal H}}x$ with ${\mathcal H} \in {\mathbb C}^{2\FOMsize \times 2\FOMsize}$ symmetric and positive definite. The expression for $H$ gives that $\frac{\partial H}{\partial x}(x(t)) = {\mathcal H} x(t)$, and so (\ref{equ:IRPH_sys}) becomes an \LTI system. Although many \pH systems (without damping) are of the from (\ref{equ:IRPH_sys}), the general form used for linear \pH systems with damping is given by
	\begin{equation}
		\label{eq:LinearPHS}
		\Syssymb_{\text{LpHS}}\ :\ \ \ \left\{
		\begin{array}{l}
			\dot{x}(t) = (J_{2\FOMsize} - R){\mathcal H} x(t) + B u(t);
			\quad
			x(0) = x_0;\\[0.8ex]
			y(t) = B^{*} {\mathcal H} x(t),
		\end{array}
		\right.
	\end{equation}
	with $J_{2\FOMsize}^{*}=-J_{2\FOMsize}$, $R$ representing the damping, satisfying $R=R^{*} \geq 0$.

	Using the properties, the rate of change of Hamiltonian/energy satisfies
	\begin{align*}
		\frac{\mathsf{d}}{\mathsf{d}t}(H(x(t))) =&\ \frac{1}{2} (\langle \dot{x}(t),{\mathcal H} x(t)\rangle + \langle {\mathcal H} x(t),\dot{x}(t)\rangle )\\
		=&\ \mathrm{Re}\langle u(t),y(t) \rangle - \mathrm{Re}\langle R{{\mathcal H}}x(t),{\mathcal H} x(t)\rangle \leq \mathrm{Re}\langle u(t),y(t) \rangle .
	\end{align*}

	\subsection{Model reduction of port-Hamiltonian systems}
	\label{sec:MOR_pH_sys}
	
	Our \pH system as given in (\ref{eq:LinearPHS}) is a special case of the general finite-dimensional \LTI system  (\ref{eq:FOM}).
	If we directly apply the \MOR method presented in Theorem \ref{thm:IRKA}, then the \pH structure will be lost. One way to keep the \pH structure is to use a specially chosen $W$ as in the following theorem.
	\begin{theorem}[\cite{Gugercin2012IRKA}]{\label{thm:IRKA-pH}}
		Consider the linear \pH system of equation (\ref{eq:LinearPHS}). Given a set of distinct interpolation points $\{s_m\}_{m= 1}^{\fresize} \subset {\mathbb C}$ and right tangent directions $\{\mu_m\}_{m = 1}^{\fresize} \subset {\mathbb C}^{p}$, define $V \in {\mathbb C}^{2\FOMsize \times \fresize}$ as 
		\begin{equation}
			\label{equ:Kry_basis_pH}
			V= \begin{bmatrix} (s_1 I_{2\FOMsize} - (J_{2\FOMsize}-R){{\mathcal H}})^{-1}B\mu_1, \dots, (s_M I_{2\FOMsize} - (J_{2\FOMsize}-R){{\mathcal H}})^{-1}B\mu_M\end{bmatrix}.
		\end{equation}
		Then for $W \in {\mathbb C}^{2\FOMsize \times \fresize}$ given by $W:= {{\mathcal H}}V(V^{*}{{\mathcal H}} V)^{-1}$, the reduced \pH system  satisfying (\ref{eqn:InterpolationCondition}) is given as
		\begin{equation}
			\label{equ:ROM_sys_mats}
			\begin{array}{rcl}
				\ROMnota{J} &=&W^{*} J_{2\FOMsize} W, 
				\quad 
				\ROMnota{R} = W^{*} R W, 
				\quad
				\ROMnota{{\mathcal H}} = V^{*} {\mathcal H} V,
				\\\
				\ROMnota{B} &=& W^{*} B,
				\quad
				\ROMnotaZero{x} = W^{*}x_0.
			\end{array}
		\end{equation}
	\end{theorem}
	\begin{proof}
		By the choice of $W$ we have that ${\mathcal H} V= W {\mathcal H}$ which provides the equality $\ROMnota{A} := (\ROMnota{J} - \ROMnota{R}) \ROMnota{{\mathcal H}} = W^{*} (J - R){\mathcal H} V = W^{*}AV$. Now
		the proof follows from Theorem \ref{thm:IRKA}. 
	\end{proof}
	
	We denote the \MOR method based on Theorem \ref{thm:IRKA-pH} as \emph{pH-Arnoldi} method.
	As we have seen in the previous subsection the $J_{2\FOMsize}$ matrix is very often of the special form (\ref{eq:PossionMatrix}). The reduced $\ROMnota{J}$ of (\ref{equ:ROM_sys_mats}) will loose this form. 
	Although when $\fresize=2\ROMsize$, a basis transformation on the reduced system can bring this to the required form ${\mathbb J}_{2\ROMsize}$, it would be nice to have transformations $V$ and $W$ which does this in one step. 
	Furthermore, in many linear \pH systems the Hamiltonian $H$ is the energy and consists of the two parts, kinetic and potential energy. The kinetic energy only depends on the momentum and the potential energy only on the position. Moreover, the dissipation term $R$ often only depends on the velocity/momentum. A canonical example is the mass spring damper (\MSD) system. 
	For electrical circuits, if the system is modelled by choosing the magnetic fluxes of inductors and electric charges of capacitors as the state, then a similar equation can be found where the dissipation terms depends on both the magnetic fluxes and the electric charges. This leads to a different block structure in \eqref{eq:pH-block_FOM}, as $R_1\neq 0_{\FOMsize}$ and $\tilde{J}$ does not need to equal the identity matrix anymore. For this, we need a novel class of \MOR methods, which we develop in the following section. 
	
	\section{Symplectic model reduction}
	\label{sec:symplecticMOR}
	
	This section provides a structure-preserving \MOR technique namely {\em symplectic} \MOR which can be applied for a linear \pH system $\Syssymb_{\mathsf{LpHS}}$ in (\ref{eq:LinearPHS}). Symplectic \MOR was originally introduced for autonomous Hamiltonian systems, see \cite{Symplectic, AfkH17}. This work can be viewed as an extension of the work of \cite{Symplectic} to \pH system with noncanonical Hamiltonian structure, i.e., $J_{2\FOMsize} \neq \mathbb{J}_{2\FOMsize}$ and non-zero dissipation. We start by recalling some essentials of symplectic geometry on vector spaces in Section \ref{sec:symplecticMOR.1} that are necessary to understand the methods used. Further, in Section \ref{sec:symp_MOR_pH} we state some definitions and propositions that are necessary to prove the main result in Theorem \ref{th:pH_with_Dissipation}. Since we interpolate at frequency points, which are complex, we need to introduce symplectic forms on complex vector spaces. 
	
	\subsection{Symplectic geometry on complex vector spaces} \label{sec:symplecticMOR.1}
	We introduce the essential definitions and theorems of symplectic geometry in this section, which are needed for the remainder of the paper. The content in the following part can be found in standard textbooks, as e.g., \cite{marsden1999introduction}.
	\begin{definition} 
		\label{D3.1}
		Let $\V$ be a vector space over ${\mathbb C}$ of even dimension. A {\em symplectic form}\/ on $\V$ is a mapping of the form $\Omega : \V \times \V \rightarrow {\mathbb C}$ being linear in the first component, anti-linear in the second, and satisfying
		\begin{enumerate}
			\item {\em Skew-symmetry}: $\Omega(v_1,v_2) = -\overline{\Omega(v_2,v_1)}$ for all $v_1,v_2 \in \V$;
			\item {\em Non-degenerate}: $\Omega(v_1,v_2) = 0$ for all  $v_1\in\V$ if and only if  $v_2 = 0$.
		\end{enumerate}
	\end{definition}
	
	The pair $(\V,\Omega)$ is called a {\em symplectic vector space} with respect to the symplectic form $\Omega$. On finite-dimensional vector spaces any full rank skew-symmetric matrix induces a symplectic form.
	\begin{proposition}
		{\label{prop:SympForm_J_2n}}
		Let $J_{2\FOMsize} \in {\mathbb C}^{2\FOMsize\times 2\FOMsize}$ be a full-rank skew-symmetric structure matrix, i.e., $J_{2\FOMsize}^{*} = - J_{2\FOMsize}$. Then $\Omega_{J_{2\FOMsize}}$ defined as 
		\begin{equation}
			\label{eqn:SympForm_J_2n}
			\Omega_{J_{2\FOMsize}}(u,v) = \langle J_{2\FOMsize} u,v\rangle=v^{*} J_{2\FOMsize} u, \qquad u,v \in {\mathbb C}^{2\FOMsize}
		\end{equation}
		is a symplectic form on ${\mathbb C}^{2\FOMsize}$.
	\end{proposition}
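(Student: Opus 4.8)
The plan is to check directly the three requirements in Definition \ref{D3.1} for the map $\Omega_{J_{2n}}(u,v) = \langle J_{2n}u,v\rangle = v^{*}J_{2n}u$. The sesquilinearity is immediate and needs no real work: $u\mapsto J_{2n}u$ is $\mathbb{C}$-linear, and the standard Hermitian pairing on $\mathbb{C}^{2n}$ is linear in its first slot and conjugate-linear in its second, so $\Omega_{J_{2n}}$ is linear in $u$ and anti-linear in $v$; the even-dimensionality hypothesis on the ambient space is trivially met since we work on $\mathbb{C}^{2n}$.

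For the antisymmetry (Definition \ref{D3.1}(1)) I would use that a $1\times 1$ matrix equals its own conjugate transpose together with $J_{2n}^{*}=-J_{2n}$:
\[
\overline{\Omega_{J_{2n}}(v_2,v_1)} = \overline{v_1^{*}J_{2n}v_2} = \bigl(v_1^{*}J_{2n}v_2\bigr)^{*} = v_2^{*}J_{2n}^{*}v_1 = -v_2^{*}J_{2n}v_1 = -\,\Omega_{J_{2n}}(v_1,v_2),
\]
which rearranges to $\Omega_{J_{2n}}(v_1,v_2) = -\overline{\Omega_{J_{2n}}(v_2,v_1)}$. For non-degeneracy (Definition \ref{D3.1}(2)), the ``if'' direction is clear since $\Omega_{J_{2n}}(v_1,0)=0$ for all $v_1$. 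For the converse, if $v_2^{*}J_{2n}v_1 = 0$ for every $v_1\in\mathbb{C}^{2n}$, then the row vector $v_2^{*}J_{2n}$ is the zero functional, hence $J_{2n}^{*}v_2 = -J_{2n}v_2 = 0$; since $J_{2n}$ has full rank it is invertible, and therefore $v_2 = 0$.

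I do not expect a genuine obstacle here: the proposition is essentially bookkeeping of the conjugation conventions, the one substantive input being that ``full rank'' is exactly the invertibility needed to force non-degeneracy. The only point worth spelling out is that it is the skew-Hermitian identity $J_{2n}^{*}=-J_{2n}$ (rather than mere skew-symmetry) that produces the complex conjugate on the right-hand side of the antisymmetry relation in Definition \ref{D3.1}(1), consistent with the convention chosen there for symplectic forms on complex vector spaces.
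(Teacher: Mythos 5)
Your proposal is correct and follows essentially the same route as the paper: direct verification of sesquilinearity, antisymmetry via $J_{2n}^{*}=-J_{2n}$, and non-degeneracy from the full-rank (invertibility) hypothesis. The only cosmetic difference is that you fix the second argument and quantify over the first (matching Definition \ref{D3.1}(2) literally), while the paper fixes the first and quantifies over the second; the two are equivalent by the antisymmetry you have already established.
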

	\begin{proof} 
		The (anti)linearity of $\Omega_{J_{2\FOMsize}}$ follows directly from the (anti)linearity of the inner product. The antisymmetry is a direct consequence of the skew-symmetry of $J_{2\FOMsize}$.
		So it remains to show that the form is non-degenerate. 
		
		If $\Omega_{J_{2\FOMsize}}(u,v)=0$ for all $v \in {\mathbb C}^{2\FOMsize}$, then $J_{2\FOMsize} u$ must equal the zero vector. Since $J_{2\FOMsize}$ is full rank, this implies that $u=0$, and thus the form is non-degenerated. 
	\end{proof}
	
	Next we introduce symplectic mappings, i.e., mappings that keep the symplectic structures.
	\begin{definition}
		\label{defn:SymplMap_J_2n}
		Let $({\mathbb C}^{2\FOMsize}, \Omega_{J_{2\FOMsize}})$ and $({\mathbb C}^{2\ROMsize}$,$\Omega_{J_{2\ROMsize}})$ be two symplectic vector spaces with $\ROMsize,\FOMsize \in \mathbb{N}$ and $\ROMsize\le\FOMsize$ where the symplectic form $\Omega_{J_{2\FOMsize}}$ and $\Omega_{J_{2\ROMsize}}$ are defined as in (\ref{eqn:SympForm_J_2n}) and $Q: {\mathbb C}^{2\ROMsize} \rightarrow {\mathbb C}^{2\FOMsize},\ y \mapsto Q y,\ Q \in {\mathbb C}^{2\FOMsize \times 2\ROMsize}$ be a linear mapping. The matrix $Q$ is called a $(J_{2\FOMsize},J_{2\ROMsize})$-{\em symplectic} if 
		\begin{equation}
			\label{eqn:SymplMap_J_2n}
			Q^{*}J_{2\FOMsize}Q = J_{2\ROMsize}.
		\end{equation}
	\end{definition}
	
	Given a $(J_{2\FOMsize},J_{2\ROMsize})$-symplectic matrix $Q$, it is easy to see that 
	\[
	\Omega_{J_{2\FOMsize}}(Q u,Q v) = \langle  J_{2\FOMsize}Q u, Q v \rangle = \langle  Q^* J_{2\FOMsize} Q u,v \rangle =\langle J_{2\ROMsize}u , v \rangle = \Omega_{J_{2\ROMsize}}(u,v),
	\]
	and thus it preserves the symplectic structure. An important object in terms of symplectic \MOR related to a symplectic mapping is the following symplectic inverse. 
	
	\begin{definition}
		\label{D:3.4}
		For a $(J_{2\FOMsize},J_{2\ROMsize})$-symplectic matrix $Q \in {\mathbb C}^{2\FOMsize \times 2\ROMsize}$, the {\em symplectic inverse}, denoted by $Q^{\Linv}$, is defined by
		\begin{equation}
			\label{equ:mat_Q_Linv}
			Q^{\Linv} := J_{2\ROMsize}^{-1}Q^{*}J_{2\FOMsize}\ \in {\mathbb C}^{2\ROMsize \times 2\FOMsize}.
		\end{equation}
	\end{definition}

	Among others the symplectic inverse is a left inverse, as is shown next. This lemma is an extension of \cite[Lemma 3.3 and 3.4]{Symplectic} to the non-canonical and complex domain.
	\begin{lemma}
		\label{lem:3.5}
		Let $Q \in {\mathbb C}^{2\FOMsize \times 2\ROMsize}$ be a $(J_{2\FOMsize},J_{2\ROMsize})$-symplectic matrix and let $Q^{\Linv}$ be the symplectic inverse from Definition \ref{D:3.4}. 
		The following holds
		\begin{enumerate}
			\item $Q^{\Linv}Q = I_{2k}$; 
			\item $Q^{\Linv}J_{2\FOMsize}^{-1} =J_{2\ROMsize}^{-1}Q^{*}$; 
			\item $(Q^{\Linv})^{*}$ is $(J_{2\FOMsize}^{-1},J_{2\ROMsize}^{-1})$-symplectic; 
			\item If $v \in \text{ran}(Q)$, then $v = QQ^{\Linv}v$.
		\end{enumerate}
	\end{lemma}
	\begin{proof}
		{\em 1.}\ Using the expression of $Q^{\Linv}$ of (\ref{equ:mat_Q_Linv}) we see that
		\[
		Q^{\Linv}Q = J_{2\ROMsize}^{-1} Q^{*}J_{2\FOMsize}Q \overset{(\ref{eqn:SymplMap_J_2n})}{=} J_{2\ROMsize}^{-1} J_{2\ROMsize} =I_{2k}.
		\]
		{\em 2.}\ Similarly as above we have
		\[
		Q^{\Linv} J_{2\FOMsize}^{-1} = J_{2\ROMsize}^{-1}Q^{*}J_{2\FOMsize} J_{2\FOMsize}^{-1} = J_{2\ROMsize}^{-1}Q^{*} .
		\]
		{\em 3.}\ To show that $(Q^{\Linv})^{*}$ is $(J_{2\FOMsize}^{-1},J_{2\ROMsize}^{-1})$-symplectic, see \eqref{eqn:SymplMap_J_2n}, we have to show that
		\[
		(Q^{\Linv}) J_{2\FOMsize}^{-1}(Q^{\Linv})^{*} = J_{2\ROMsize}^{-1}.
		\]
		Note that $J_{2\FOMsize}^{-1}$, and $J_{2\ROMsize}^{-1}$ are full rank, skew-symmetric matrices. 
		Using the expression on $Q^{\Linv}$, we find
		\begin{align*}
			Q^{\Linv} J_{2\FOMsize}^{-1}(Q^{\Linv})^{*} =&\ J_{2\ROMsize}^{-1}Q^{*}J_{2\FOMsize} J_{2\FOMsize}^{-1}
			(J_{2\ROMsize}^{-1}Q^{*}J_{2\FOMsize})^{*} = 
			(-1)^2 J_{2\ROMsize}^{-1}Q^{*}J_{2\FOMsize} J_{2\FOMsize}^{-1}
			J_{2\FOMsize}^{*}Q J_{2\ROMsize}^{-*}\\
			=&\
			J_{2\ROMsize}^{-1}Q^{*}  J_{2\FOMsize}^{*}Q J_{2\ROMsize}^{-*} = J_{2\ROMsize}^{-1} J_{2\ROMsize}^{*} J_{2\ROMsize}^{-*} = J_{2\ROMsize}^{-1}.
		\end{align*}
		{\em 4.}\ If $v \in \mathrm{ran}(Q)$, then there exists a $w \in {\mathbb C}^{2\FOMsize}$ such that $v = Q w$. By item 1.\ this implies
		\[
		Q^{\Linv}v = Q^{\Linv}Q w = w 
		\]
		and so $v = Q w = QQ^{\Linv}v$. 
	\end{proof}
	For the canonical symplectic matrices $\mathbb{J}_{2\FOMsize}$ and $\mathbb{J}_{2\ROMsize}$, the above expressions become simpler, since 
	\begin{equation*}
		\mathbb{J}_{2\FOMsize}^{-1} = -\mathbb{J}_{2\FOMsize} = \mathbb{J}_{2\FOMsize}^{\mathrm T}=\mathbb{J}_{2\FOMsize}^*.
	\end{equation*}
	In that case, we don't call $Q$ a $({\mathbb J}_{2\FOMsize}, {\mathbb J}_{2\ROMsize})$-symplectic matrix, but just $\mathbb{J}_{2\FOMsize}$-{\em symplectic}.
	
	\subsection{Symplectic model reduction of port-Hamiltonian systems} \label{sec:symp_MOR_pH}
	
	In this section, we derive symplectic \MOR techniques preserving the port-Hamiltonian structure. We consider two cases: 
	\begin{enumerate}
		\item Without dissipation, i.e., $R = 0_{2\FOMsize}$, in Section \ref{Subsec:wo_dissipation};
		\item With dissipation, i.e., $R \neq 0_{2\FOMsize}$, in Section \ref{Subsec:w_dissipation}.
	\end{enumerate}
	Further, in Section \ref{sec:construction of matrices}, we detail the construction of matrices needed in the \MOR method described in Section \ref{Subsec:w_dissipation}. We start with the case without dissipation.
	
	\subsubsection{Port-Hamiltonian system without dissipation}\label{Subsec:wo_dissipation}
	
	Consider the \pH system (\ref{eq:LinearPHS}) on the symplectic vector space (${\mathbb C}^{2\FOMsize}, \Omega_{J_{2\FOMsize}}$) with $R = 0_{2\FOMsize}$. In the following theorem we give conditions under which we can find a reduced order system (\ref{eq:MOR_LinearPHS}), without dissipation, such that the reduced and original transfer functions are equal at prescribed frequencies. 
	
	\begin{theorem}
		{\label{thm:PHS_NoDissipation}}
		Consider the \pH system without dissipation, i.e., $\Syssymb_{LpHS}$ of (\ref{eq:LinearPHS}) with $R=0_{2\FOMsize}$.
		Let $Q \in {\mathbb C}^{2\FOMsize \times 2\ROMsize}$ be a $(J_{2\FOMsize}^{-1},J_{2\ROMsize}^{-1})$-symplectic matrix, and let $Q^{\Linv} \in {\mathbb C}^{2\ROMsize \times 2\FOMsize}$ be its  symplectic inverse, where $J_{2\ROMsize}$ is the skew-symmetric matrix of the reduced system. 
		
		Finally, let $\mu_1, \cdots, \mu_M \in {\mathbb C}^p$ and $s_1, \ldots,s_M \in {\mathbb C}$ be given.
		
		If ${\mathrm{ran}}(B) \subseteq {\mathrm {ran}}(Q)$ and $(s_m I_{2\FOMsize}- J_{2\FOMsize}{\mathcal H})^{-1} B\mu_m \in {\mathrm {ran}}(Q) $, $m=1,\ldots,\fresize$, then the reduced \pH system (\ref{eq:MOR_LinearPHS}) with $\ROMnota{B} = Q^{\Linv}B$ and $\ROMnota{{\mathcal H}} = Q^{*}{\mathcal H}Q$ satisfies 
		$\ROMnota{G}(s_m)\mu_m = \ROMnota{G}(s_m)\mu_m$, $m=1,\cdots, \fresize,$ i.e., the reduced \pH system equals the full \pH system at the given frequencies. 
		Moreover, $\lim_{s\rightarrow\infty}sG(s) = \lim_{s\rightarrow\infty}sG_r(s)$ holds.
	\end{theorem}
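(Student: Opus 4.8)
\quad The idea is that the reduced state is simply the image of the full state under the symplectic inverse $Q^{-\mathsf L}$, and that this map intertwines the full resolvent equation \eqref{eq:2.26} with the reduced one \eqref{eq:2.29}; no Krylov dimension counting or optimality argument is needed. First I would collect the structural identities. Since $Q$ is $(J_{2n}^{-1},J_{2k}^{-1})$-symplectic we have $Q^{*}J_{2n}^{-1}Q=J_{2k}^{-1}$, which is invertible, so $Q$ has full column rank; hence $\hat{\mathcal H}=Q^{*}\mathcal H Q$ is symmetric positive definite, and \eqref{eq:MOR_LinearPHS} with $\hat R=0$ is a genuine pH-system. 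Reading Definition~\ref{D:3.4} and Lemma~\ref{lem:3.5} with $J_{2n}^{-1},J_{2k}^{-1}$ in the roles of $J_{2n},J_{2k}$, the symplectic inverse is $Q^{-\mathsf L}=J_{2k}Q^{*}J_{2n}^{-1}$, and consequently
\[
  Q^{-\mathsf L}Q=I_{2k},\qquad Q^{-\mathsf L}J_{2n}=J_{2k}Q^{*},\qquad Q^{-\mathsf L}(J_{2n}\mathcal H)Q=J_{2k}\hat{\mathcal H},\qquad Q^{-\mathsf L}B=\hat B .
\]
In other words $(J_{2k}\hat{\mathcal H},\hat B)$ is precisely the $(Q^{-\mathsf L},Q)$-compression of $(J_{2n}\mathcal H,B)$; the structural heart of the statement is the second identity, which is what makes the compressed dynamics again of the factored form $J_{2k}\hat{\mathcal H}$.

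Next, for each fixed $m$ I would set $x_m:=(s_mI_{2n}-J_{2n}\mathcal H)^{-1}Bu_m$, so that $G(s_m)u_m=B^{\mathsf T}\mathcal H x_m$ and, by hypothesis, $x_m\in{\rm ran}(Q)$. Put $z_m:=Q^{-\mathsf L}x_m$; then $Qz_m=x_m$ by Lemma~\ref{lem:3.5}(4). Multiplying $(s_mI_{2n}-J_{2n}\mathcal H)x_m=Bu_m$ on the left by $Q^{-\mathsf L}$ and substituting $x_m=Qz_m$, the identities above collapse this to $(s_mI_{2k}-J_{2k}\hat{\mathcal H})z_m=\hat Bu_m$, so $(z_m,u_m,\hat B^{\mathsf T}\hat{\mathcal H}z_m)$ solves \eqref{eq:2.29} at $s=s_m$ and hence $G_r(s_m)u_m=\hat B^{\mathsf T}\hat{\mathcal H}z_m$ (assuming, as in Theorem~\ref{thm:IRKA}, that $s_mI_{2k}-J_{2k}\hat{\mathcal H}$ is invertible). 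For the output, using $\hat{\mathcal H}z_m=Q^{*}\mathcal H Qz_m=Q^{*}\mathcal H x_m$ together with $QQ^{-\mathsf L}B=B$ (Lemma~\ref{lem:3.5} shows $QQ^{-\mathsf L}$ fixes ${\rm ran}(Q)\supseteq{\rm ran}(B)$),
\[
  \hat B^{\mathsf T}\hat{\mathcal H}z_m=\hat B^{\mathsf T}Q^{*}\mathcal H x_m=(Q\hat B)^{\mathsf T}\mathcal H x_m=(QQ^{-\mathsf L}B)^{\mathsf T}\mathcal H x_m=B^{\mathsf T}\mathcal H x_m=G(s_m)u_m ,
\]
which is the asserted interpolation property; since $m$ was arbitrary, we are done.

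I expect the only genuinely fiddly point to be the step $\hat B^{\mathsf T}Q^{*}=(Q\hat B)^{\mathsf T}$ in the last display, which tacitly uses $Q^{\mathsf T}=Q^{*}$: this is immediate when the reduced realization is real, and in the truly complex case one should arrange ${\rm ran}(Q)$ to be closed under complex conjugation — with the interpolation data organized in conjugate pairs $\{s_m,\overline{s_m}\}$, $\{u_m,\overline{u_m}\}$ — so that a real representative of $Q$ exists; then $\hat B,\hat{\mathcal H}$ are real and \eqref{eq:MOR_LinearPHS} is an honest real pH-system. Everything else is a direct substitution from Lemma~\ref{lem:3.5}, and the two range hypotheses (${\rm ran}(B)\subseteq{\rm ran}(Q)$ and $(s_mI-J_{2n}\mathcal H)^{-1}Bu_m\in{\rm ran}(Q)$) are used exactly once each: the first to get $QQ^{-\mathsf L}B=B$ in the output identity, the second to make $x_m=Qz_m$ legitimate.
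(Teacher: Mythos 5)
Your proposal is correct and follows essentially the same route as the paper: multiply the resolvent identity $(s_mI-J_{2n}\mathcal H)x_m=Bu_m$ on the left by $Q^{-\mathsf L}$, use $Q^{-\mathsf L}J_{2n}=J_{2k}Q^{*}$ and $x_m=QQ^{-\mathsf L}x_m$ (Lemma \ref{lem:3.5}) to land on the reduced resolvent equation, and use $\mathrm{ran}(B)\subseteq\mathrm{ran}(Q)$ to match the outputs. Your closing remark about $Q^{\mathsf T}$ versus $Q^{*}$ (equivalently $\hat B^{\mathsf T}$ versus $\hat B^{*}$ in the reduced output map) flags a genuine sloppiness that the paper's own proof also glosses over, and your suggested fix via conjugate-closed interpolation data is the standard remedy.
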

	\begin{proof}
		Let us denote $(s_m I_{2\FOMsize}- J_{2\FOMsize}{\mathcal H})^{-1} B\mu_m$ by $\eta_{m}$, then $G(s_m)\mu_m= B^{*}{\mathcal H} \eta_m$. Since $\eta_m \in {\mathrm{ran}}(Q)$, we have by Lemma \ref{lem:3.5}.4 that $\eta_m = QQ^{\Linv}\eta_m$.
		Now,
		\begin{align}
			\label{eq:3.6}
			s_m\eta_m - J_{2\FOMsize}{\mathcal H} \eta_m - B\mu_m =&\ 0 \Rightarrow\\
			\nonumber
			s_m Q^{\Linv}\eta_m - Q^{\Linv}J_{2\FOMsize}{{\mathcal H}}\eta_m - Q^{\Linv}B\mu_m =&\ 0 \Rightarrow\\
			\nonumber
			s_m Q^{\Linv}\eta_m - J_{2\ROMsize}Q^{*}{\mathcal H}QQ^{\Linv}\eta_m - Q^{\Linv}B\mu_m =&\ 0 \Rightarrow\\
			\label{eq:3.7}
			s_m z_m- J_{2\ROMsize}\ROMnota{{\mathcal H}}z_m - \ROMnota{B}\mu_m =&\ 0,
		\end{align}
		with $z_m = Q^{\Linv}\eta_m$, $\ROMnota{{\mathcal H}} = Q^{*}{\mathcal H}Q$, and $\ROMnota{B} = Q^{\Linv}B$, and where the third implication follows from Lemma \ref{lem:3.5}.3. 
		Since $\mathrm{ran}(B) \subseteq \mathrm{ran}(Q)$, we have by Lemma \ref{lem:3.5}.4 that $B\mu = QQ^{\Linv}B\mu$ for all $\mu$. Equivalently, $B^*= B^{*}(Q^{\Linv})^{*}Q^{*}$. Therefore, 
		\begin{equation}
			\label{eq:3.9}
			G(s_m)\mu_m = B^{*}{{\mathcal H}}\eta_m = B^{*}(Q^{\Linv})^{*}Q^{*}{\mathcal H}QQ^{\Linv}\eta_m = \ROMnota{B}^{*}\ROMnota{{\mathcal H}}z_m,
		\end{equation}
		where we used once more that $\eta_m = QQ^{\Linv}\eta_m$. 
		From (\ref{eq:3.6}), (\ref{eq:3.7}), and (\ref{eq:3.9}), it follows that $\ROMnota{G}(s_m)\mu_m = G(s_m)\mu_m$.
		It is easy to check that $\ROMnota{{\mathcal H}}$ is symmetric and positive semi-definite, i.e., $\ROMnota{{\mathcal H}}^{*} = \ROMnota{{\mathcal H}} \geq 0_{2\FOMsize}$. Hence, the reduced system preserves the \pH structure. Furthermore, since $\lim_{s\rightarrow\infty} s(s_m I_{2\FOMsize}- J_{2\FOMsize}{\mathcal H})^{-1} B = B$ and $\mathrm{ran}(B) \subseteq \mathrm{ran}(Q)$, we have $\lim_{s\rightarrow\infty}sG(s) = \lim_{s\rightarrow\infty}s\ROMnota{G}(s)$.
	\end{proof}
	
	Given the conditions in the above theorem we see that $k \geq \fresize+p$.
	
	
	\subsubsection{Port-Hamiltonian system with dissipation}\label{Subsec:w_dissipation}

	In the dissipation-free case, we saw that $Q^{-L} J_{2\FOMsize}{\mathcal H}Q= J_{2\ROMsize} \ROMnota{{\mathcal H}}$. However, when $R\neq 0$, then  $Q^{-L} R{\mathcal H} Q\neq  \ROMnota{R}\ROMnota{{\mathcal H}}$. Thus in this situation, we have to adapt our construction. 
	\begin{theorem}
		\label{th:pH_with_Dissipation}
		Consider the full-order \pH system with dissipation \eqref{eq:LinearPHS}. Take 
		\begin{equation}
			\label{eq:QE}
			Q_{\mathsf{E}} := 
			\begin{bmatrix} 
				Q_1 & 0_{2\FOMsize\times 2\ROMsize}\\ 
				0_{2\FOMsize\times 2\ROMsize} & Q_2 
			\end{bmatrix}
			\in \mathbb{C}^{4\FOMsize \times 4\ROMsize},
		\end{equation}
		where $0_{2\FOMsize\times 2\ROMsize}\in {\mathbb C}^{2\FOMsize\times 2\ROMsize}$ denotes the zero matrix, $Q_2 \in \mathbb{C}^{2\FOMsize \times 2\ROMsize}$ with $k<n$ is $(J_{2\FOMsize},J_{2\ROMsize})$-symplectic, and $Q_1^{*}$ is a left inverse of $Q_2$. Here $J_{2\ROMsize}$ is the skew-symmetric matrix of the reduced \pH system.
		
		Let $\mu_1, \ldots, \mu_{\fresize} \in {\mathbb C}^p$, $s_1, \ldots,s_{\fresize} \in {\mathbb C}$ be given.
		If $(s_m I_{2\FOMsize}- (J_{2\FOMsize}-R){\mathcal H})^{-1} B \mu_m \in {\mathrm {ran}}(Q_1) $, $m=1,\ldots,\fresize$ and ${\mathrm {ran}}({\mathcal H} Q_1) \subseteq {\mathrm {ran}}(Q_2)$, 
		then the (reduced-order) \pH system with $\ROMnota{B}=Q_2^{*} B$, $\ROMnota{\mathcal H}=Q_1^{*} {\mathcal H}Q_1$, and $\ROMnota{R}= Q_2^{*} R Q_2$ satisfies
		$G(s_m)\mu_m = \ROMnota{G}(s_m)\mu_m$ for $m=1,\ldots,\fresize$.
	\end{theorem}
	\begin{proof}
		Applying the definitions for the reduced model, we find that its transfer function is given by
		\begin{align}
			\nonumber
			\ROMnota{G}(s_m)\mu_m
			&=
			\ROMnota{B}^{*}\ROMnota{\mathcal H}(s_m I_{2\ROMsize}- (J_{2\ROMsize}-\ROMnota{R})\ROMnota{\mathcal H})^{-1}\ROMnota{B} \mu_m\\
			\nonumber
			&=
			B^{*}Q_2 Q_1^*{\mathcal H}Q_1
			(s_m I_{2\ROMsize} - (Q_2^*J_{2\FOMsize}Q_2 - Q_2RQ_2)Q_1^*{\mathcal H} Q_1)^{-1}
			Q_2^{*}B \mu_m\\
			&=
			\label{eq:29}
			B^{*}Q_2 Q_1^*{\mathcal H}Q_1
			(s_m I_{2\ROMsize} - Q_2^*(J_{2\FOMsize}-R)Q_2Q_1^*{\mathcal H}Q_1)^{-1}
			Q_2^{*}B \mu_m.
		\end{align}
		Since $(s_m I_{2\FOMsize}-(J_{2\FOMsize}-R){\mathcal H})^{-1}B \mu_m$ lies by assumption in the range of $Q_1$, there exists $\xi_m \in {\mathbb C}^{2\FOMsize}$ such that  $Q_1\xi_m  = (s_m I_{2\FOMsize}-(J_{2\FOMsize}-R){\mathcal H})^{-1}B \mu_m$. This implies that
		\[
		B \mu_m = (s_m I_{2\FOMsize}-(J_{2\FOMsize}-R){\mathcal H})Q_1\xi_m .
		\]
		As $Q_1^*Q_2=I$, and thus $Q_2^*Q_1=I$, we find that
		\[
		Q_2^*B \mu_m = (s_m I_{2\ROMsize}-Q_2^* (J_{2\FOMsize}-R){\mathcal H}Q_1)\xi_m. 
		\]
		Applying this to the equality in (\ref{eq:29}) yields
		\begin{align*}
			\ROMnota{G}(s_m)\mu_m =&\ B^{*}Q_2 Q_1^*{\mathcal H}Q_1
			(s_m I_{2k} - Q_2^*(J_{2\FOMsize}-R)Q_2Q_1^*{\mathcal H}Q_1)^{-1}\\
			&\
			(s_m I_{2k}-Q_2^* (J_{2\FOMsize}-R){\mathcal H}Q_1)\xi_m.
		\end{align*}
		The inclusion ${\mathrm {ran}}({\mathcal H} Q_1) \subseteq {\mathrm {ran}}(Q_2)$ combined with the fact that $Q_1^*$ is a left inverse of $Q_2$ implies that ${\mathcal H} Q_1 = Q_2 Q_1^* {\mathcal H} Q_1$. With this the above equality can be simplified to
		\begin{align*}
			\ROMnota{G}(s_m)\mu_m =&\ B^{*}{\mathcal H}Q_1
			(s_m I_{2k} - Q_2^*(J_{2\FOMsize}-R){\mathcal H}Q_1)^{-1}\\
			&\
			(s_m I_{2k}-Q_2^* (J_{2\FOMsize}-R){\mathcal H}Q_1)\xi_m\\
			=&\  B^{*}{\mathcal H}Q_1 \xi_m\\
			=&\
			B^{*}{\mathcal H}(s_m I_{2\FOMsize}-(J_{2\FOMsize}-R){\mathcal H})^{-1}B \mu_m 
			=
			G(s_m)\mu_m.
		\end{align*}
		Hence we have proved the theorem.
	\end{proof}
	
	The result as presented in the theorem above relies heavily on the existence of two matrices $Q_1$ and $Q_2$ satisfying certain properties. In the following subsection, we show that these matrices can be constructed and we provide an algorithm for it. 
	
	\subsubsection{Construction of matrices for symplectic-\pH with dissipation}
	\label{sec:construction of matrices}
	
	In Theorem \ref{th:pH_with_Dissipation} we showed that the reduced \pH system can be constructed once we have constructed the matrix $Q_E$ (\ref{eq:QE}) with the mentioned properties. In this section, we give a constructive instruction how this matrix can be realized computationally. In the remainder of this section, we show that with this $Q_E$ the block diagonal structure of the \FOM is maintained in the \ROM.
	
	To construct $Q_E$ satisfying the properties as needed in Theorem \ref{th:pH_with_Dissipation}, we use $Q_1$ and $Q_2$ with the following structure 
	\begin{equation}
		\label{eq:mat_Q_structure}
		Q_1 = 
		\left[
		\begin{array}{cc}
			\Phi_1 & 0_{\FOMsize\times \ROMsize} \\
			0_{\FOMsize\times \ROMsize} & \Psi_1
		\end{array}
		\right]
		,\quad
		Q_2 = 
		\left[
		\begin{array}{cc}
			\Phi_2 & 0_{\FOMsize\times \ROMsize} \\
			0_{\FOMsize\times \ROMsize} & \Psi_2
		\end{array}
		\right],
	\end{equation}
	where $\Phi_1,\Psi_1,\Phi_2,\Psi_2\in{\mathbb C}^{\FOMsize\times \ROMsize}$.
	Assuming that the $J$ matrices have the following structure
	\begin{equation*}
		J_{2\FOMsize} = 
		\left[
		\begin{array}{cc}
			0_{\FOMsize}& \hat{J}_{\FOMsize}\\
			-\hat{J}_{\FOMsize}^{*} &0_{\FOMsize}
		\end{array}
		\right],
		\quad
		J_{2\ROMsize} = 
		\left[
		\begin{array}{cc}
			0_{\ROMsize}& \hat{J}_{\ROMsize}\\
			-\hat{J}_{\ROMsize}^{*} &0_{\ROMsize}
		\end{array}
		\right],
	\end{equation*}
	the conditions in Theorem \ref{th:pH_with_Dissipation} imply that $\Phi_1,\Psi_1,\Phi_2,$ and $\Psi_2$ in (\ref{eq:mat_Q_structure}) satisfy 
	\begin{equation}
		\label{equ:condition_for_Phi_and_Psi}
		\Phi_1^{*}\Phi_2 = I_{\ROMsize},\quad
		\Psi_1^{*}\Psi_2 = I_{\ROMsize},\quad
		\Phi_2^{*}\hat{J}_{\FOMsize}\Psi_2 = \hat{J}_{\ROMsize}.
	\end{equation}
	
	For a given interpolation data $\mu_1, \ldots, \mu_{\fresize} \in {\mathbb C}^p$, $s_1, \ldots,s_{\fresize} \in {\mathbb C}$, we construct two matrices $V_1, V_2\in{\mathbb C}^{2\FOMsize\times \ROMsize}$ such that
	\begin{equation}
		\label{eq:30}
		\begin{array}{l}
			(s_m I_{2\FOMsize} - (J_{2\FOMsize}-R){\mathcal H})^{-1} B \mu_m \in {\mathrm {ran}}(V_1),
			\
			m=1,\ldots, \fresize,\\
			\mbox{ and }\quad
			{\mathrm {ran}}({\mathcal H} V_1) \subseteq {\mathrm {ran}}(V_2).
		\end{array}
	\end{equation}
	Moreover, we partition $V_1$ and $V_2$ as 
	\begin{equation}
		\label{eq:mat_V_partition}
		V_1 = \left[
		\begin{array}{c}
			V_{11}\\
			V_{12}
		\end{array}
		\right],
		\quad
		V_2= \left[
		\begin{array}{c}
			V_{21}\\
			V_{22}
		\end{array}
		\right]
	\end{equation}
	with $V_{11}, V_{12}, V_{21},V_{22}\in{\mathbb C}^{\FOMsize\times \ROMsize}$. Using this notation, we see from (\ref{eq:mat_Q_structure}) and (\ref{eq:30}) that the range conditions of Theorem \ref{th:pH_with_Dissipation} are satisfied when ran$(V_{11}) \subseteq$ ran$(\Phi_1)$, ran$(V_{12}) \subseteq$ ran$(\Psi_1)$, ran$(V_{21}) \subseteq$ ran$(\Phi_2)$, and ran$(V_{22}) \subseteq$ ran$(\Psi_2)$. 
	To ensure that these conditions and the conditions given by \eqref{equ:condition_for_Phi_and_Psi} hold, we construct $\Phi_1, \Psi_1,\Phi_1$, and $\Psi_2$ by
	\begin{equation}
		\label{eq:const_phi_psi}
		\begin{array}{rclrcl}
			\Psi_2 &=&\ {\rm orth}(V_{22}),&
			\Psi_1 &=& V_{12}(\Psi_2^{*} V_{12})^{-1},
			\\
			\Phi_2 &=&\ V_{21} \left( \Psi_2^{*}\hat{J}_{\FOMsize}^{*}V_{21}\right)^{-1}\hat{J}_{\ROMsize}^{*},&\
			\Phi_1 &=& V_{11}(\Phi_2^{*}V_{11})^{-1},
		\end{array}
	\end{equation}
	where the operator $`` {\rm orth}"$ returns an orthogonal basis of the same dimension, based on the singular value decomposition. Additionally, we assume the existence of the inverses in \eqref{eq:const_phi_psi}. 
	Since the above construction is explicit, it can be implemented straight-forwardly by the pseudo code provided in Algorithm \ref{alg:sym-pH}.
	\begin{algorithm}
		\caption{Symplectic \MOR for \pH systems (symplectic-\pH)}
		\label{alg:sym-pH}
		\renewcommand{\algorithmicrequire}{\textbf{Input:}}
		\renewcommand{\algorithmicensure}{\textbf{Output:}}
		\begin{algorithmic}
			\Require \pH system \eqref{eq:pH-block_FOM}, reduced-order $J_{2\ROMsize}$, $\mu_1, \ldots, \mu_{\fresize} \in {\mathbb C}^p$, $s_1, \ldots,s_{\fresize} \in {\mathbb C}$.
			\Ensure Reduced-order \pH system of form of \eqref{equ:sympl_pH_ROM}.
			\State
			Construct $V_1$ by \eqref{equ:Kry_basis_pH}.
			\State
			Denote $V_2$ by $V_2 \coloneqq \mathcal{H}V_1$.
			\State
			Build $V_{11},V_{12},V_{21},V_{22}$ based on the partition given by \eqref{eq:mat_V_partition}.
			\State
			Construct $\Psi_1$, $\Psi_2$, $\Phi_1$, $\Phi_2$ by \eqref{eq:const_phi_psi}.
			\State
			Build $Q_1$ and $Q_2$ by \eqref{eq:mat_Q_structure}
			\State
			Apply Petrov-Galerkin with projection bases $Q_1$ and $Q_2$ to \eqref{eq:pH-block_FOM}.
		\end{algorithmic}
	\end{algorithm}
	
	Since the $Q_1$ and $Q_2$ are block diagonal, the special block structure of the \FOM given by \eqref{eq:pH-block_FOM} is preserved in the \ROM. Thus, with matrices $Q_1$ and $Q_2$ of (\ref{eq:mat_Q_structure}), we arrive at the reduced-order \pH system 
	\begin{equation}
		{\label{equ:sympl_pH_ROM}}
		\Syssymb_{\text{sym-pH-ROM}}\ 
		\colon
		\left\{
		\begin{array}{l}
			\ROMnota{\dot{x}}(t) = (\ROMnota{J}-\ROMnota{R})\ROMnota{{\mathcal H}} \ROMnota{x}(t) + \ROMnota{B} u(t);\
			\ROMnota{x}(0)=x_{red,0};\\
			\ROMnota{y}(t) = \ROMnota{B}^{\top}\ROMnota{{\mathcal H}} \ROMnota{x}(t),
		\end{array}
		\right.
	\end{equation}
	where
	\begin{equation}
		\label{eq:34}
		\begin{array}{c}
			\ROMnota{J} = J_{2\ROMsize} ,\quad
			\ROMnota{R} = \left[
			\begin{array}{cc}
				\Phi_2^{*}R_1\Phi_2 & 0_{\ROMsize}\\
				0_{\ROMsize}        &\Psi_2^{*}R_2\Psi_2
			\end{array}
			\right]
			,\quad
			\ROMnota{B} = 
			\left[
			\begin{array}{c}
				\Phi_2^{*}B_1\\
				\Psi_2^{*}B_2
			\end{array}
			\right],
			\\
			\ROMnota{{\mathcal H}} = \left[
			\begin{array}{cc}
				\Phi_1^{*}{\mathcal H}_1\Phi_1&0_{\ROMsize}\\
				0_{\ROMsize}                &\Psi_1^{*}{\mathcal H}_2\Psi_1
			\end{array}
			\right]
			,
			\quad
			\ROMnotaZero{x} 
			=
			\left[
			\begin{array}{cc}
				\Phi_2^* & 0_{\ROMsize\times\FOMsize} \\
				0_{\ROMsize\times \FOMsize}& \Psi_2^*
			\end{array}
			\right]
			x_0
			.
		\end{array}
	\end{equation}
	
	\section{Mass spring damper systems}
	\label{sec:MSD}
	
	As an example of the theory derived in the previous section, we study the mass spring damper system (\MSD). That is we 
	consider the second order system of the form
	
	\begin{equation}
		\label{eq:MSD_sys}
		\begin{array}{l}
			M \ddot{z}(t) + D \dot{z}(t) + K z(t) = \SoBmat u(t);\quad
			z(0)=z_0;\\
			y(t) = \SoBmat^{*} \dot{z}(t), 
		\end{array}
	\end{equation}
	with $z(t) \in {\mathbb C}^{\FOMsize}$, $M=M^{*}  >0$, $K=K^{*}  >0$, and $D=D^{*} \geq 0.$\footnote{Note that here the $D$ denotes the damping, and not the feedthrough operator of (\ref{eq:FOM}).} It is well-know that this system can be written as a \pH system by introducing the state (position and momentum)
	\[
	x(t) = \left[ \begin{array}{c} z(t) \\ M \dot{z}(t) \end{array} \right].
	\]
	The corresponding \pH system equals 
	\begin{equation}
		\label{eq:SO_pH_FO_form}
		\begin{array}{rcl}
			\dot{x}(t)
			&=&\ \left( 
			\left[\begin{array}{cc} 
				0_{\FOMsize} & I_{\FOMsize} \\
				-I_{\FOMsize} &0_{\FOMsize}  
			\end{array} \right] 
			-  \left[\begin{array}{cc} 
				0_{\FOMsize} & 0_{\FOMsize}  \\
				0_{\FOMsize} & D \end{array} \right] \right)  
			\left[ \begin{array}{cc}
				K & 0_{\FOMsize} \\ 
				0_{\FOMsize} & M^{-1} 
			\end{array} \right]
			x(t)
			+ \left[ \begin{array}{c} 
				0_{\FOMsize\times \portsize} \\ \SoBmat 
			\end{array} \right] u(t) ,
			\\
			y(t) &=&\ 
			\left[ \begin{array}{cc} 
				0_{\portsize\times\FOMsize} & \SoBmat^{*}
			\end{array} \right]   
			\left[ \begin{array}{cc} 
				K & 0_{\FOMsize} \\ 
				0_{\FOMsize} & M^{-1} 
			\end{array} \right] x(t)
			,
			\quad x(0)= x_0 =
			\left[ \begin{array}{c} z(0) \\ M \dot{z}(0) \end{array} \right].
		\end{array}
	\end{equation}
	So \eqref{eq:MSD_sys} is rewritten as the \pH system (\ref{eq:LinearPHS}) with the special $J_{2\FOMsize} = {\mathbb J}_{2\FOMsize}$, and block diagonal $R$ and ${{\mathcal H}}$. 
	
	It is easy to see that a \pH system with the structure as in (\ref{eq:SO_pH_FO_form}) is a second order system of (\ref{eq:MSD_sys}). Hence, if our full order \pH system has this structure we would like that this structure remains when constructing the reduced system.  Summarising our problem statement is: For the \pH system (\ref{eq:LinearPHS}), construct a reduced \pH system 
	\begin{equation}{\label{eq:MOR_LinearPHS}}
		\Syssymb_{\text{rLpHS}}\ 
		\colon
		\ \ \ \left\{
		\begin{array}{l}
			\ROMnota{\dot{x}}(t) = (\ROMnota{J} - \ROMnota{R})\ROMnota{{\mathcal H}}\ROMnota{x}(t) + \ROMnota{B}u(t);
			\quad \ROMnota{x}(0)=x_{red,0}; \\
			\ROMnota{y}(t) = \ROMnota{B}^{*}\ROMnota{{\mathcal H}}\ROMnota{x}(t),
		\end{array}
		\right.
	\end{equation}
	that has the following properties:
	\begin{itemize}
		\item The reduced system interpolates the original transfer function in prescribed points, i.e, (\ref{eqn:InterpolationCondition}) is satisfied;
		\item
		The $\ROMnota{J}$ matrix of the reduced system is a canonical Poisson matrix, e.g.,  $\ROMnota{J}=\mathbb{J}_{2\ROMsize}$;
		\item
		The $\ROMnota{R}$ and $\ROMnota{{\mathcal H}}$ keep the block diagonal with first block on the diagonal of in $\ROMnota{R}$ is zero. Furthermore, the upper part of $\ROMnota{B}$ is zero. 
	\end{itemize}
	
	As is clear from \eqref{eq:34}, the proposed symplectic-pH \MOR method ensures that all these requirements are satisfied for the \FOM \eqref{eq:SO_pH_FO_form}. Thus, the reduced order system can be written as a \MSD system again. Another \MOR technique that keeps the \MSD structure is the second-order Arnoldi (\SO-Arnoldi) method. The key idea of the \SO-Arnoldi method is to find a proper projection basis and then apply a projection to the second-order system directly.
	\begin{theorem}[Second-order Arnoldi method \cite{Bai2005}]
		{\label{Th:6}}
		Given a \MSD system \eqref{eq:MSD_sys}, with the interpolation frequencies $s_1, \cdots, s_{\fresize}\in {\mathbb C}$ and tangent directions $\mu_1, \cdots,$ $ \mu_{\fresize}\in {\mathbb C}^{\portsize}$. 
		Let $V_0 \in {\mathbb C}^{\FOMsize\times \ROMsize}$ be such that $V^{*}_0V_0=I$ and $(s_m^2 M +  s_m D + K)^{-1} B\mu_m \in \mathrm{ran}(V_0)$, $m=1, \cdots, {\fresize}$. Then the second order \MSD system
		\begin{equation}
			\label{eq:SO_pH_ROM}
			\begin{array}{l}
				\ROMnota{M} \ROMnota{\ddot{z}}(t) + \ROMnota{D} \ROMnota{\dot{z}}(t) + \ROMnota{K} \ROMnota{z}(t) = \ROMnota{\SoBmat} u(t);
				\quad
				\ROMnota{z}(0) = 
				\ROMnotaZero{z}  ;\\
				y(t) = \ROMnota{\SoBmat}^{*} \ROMnota{\dot{z}}(t)
				,
			\end{array}
		\end{equation}
		with
		\begin{equation}
			\label{eq:const_MDKGamma_SO_Arnoldi}
			\begin{array}{rcl}
				\ROMnota{M}&:=&V^{*}_0MV_0,\
				\ROMnota{D}:=V^{*}_0DV_0,\\
				\ROMnota{K}&:=&V^{*}_0KV_0,\ 
				\ROMnota{\SoBmat}:=V^{*}_0 \SoBmat,\
				\ROMnotaZero{z}:=V^{*}_0z_0
			\end{array}
		\end{equation}
		is a reduced order model of \eqref{eq:MSD_sys} satisfying $G(s_m)\mu_m=\ROMnota{G}(s_m)\mu_m$, $m =1,\ldots, M$, where
		$\ROMnota{G}$ is the transfer function of \eqref{eq:SO_pH_ROM}. 
	\end{theorem}
	
	So we now have two methods for constructing a reduced \MSD system. In the following theorem we show that these methods result in the same reduced system. That is, there exists a basis transformation that transforms the second order reduced system into the symplectic \ROM. To simplify the exposition, we present here the reduced order \MSD model when using the reduction technique from Theorem \ref{th:pH_with_Dissipation}, see also \eqref{eq:34}.
	\begin{equation}
		\label{eq:ROM_symp_MSD}
		\begin{array}{l}
			\ROMnota{\cootrans{M}} \ROMnota{\ddot{\cootrans{z}}}(t) + \ROMnota{\cootrans{K}} \ROMnota{\dot{\cootrans{z}}}(t) + \ROMnota{\cootrans{D}} \ROMnota{\cootrans{z}}(t) = \ROMnota{\cootrans{\Gamma}} u(t);
			\quad
			\ROMnota{\cootrans{z}}(0) = \ROMnotaZero{\cootrans{z}};\\
			\ROMnota{\cootrans{y}}(t) = \ROMnota{\cootrans{\Gamma}}^{*}\ROMnota{\dot{\cootrans{z}}} (t)
			,
		\end{array}
	\end{equation}
	with
	\begin{equation}
		\label{equ:symph_ROM_mats}
		\begin{array}{rcl}
				\ROMnota{\cootrans{M}} &=& (\Psi_1^{*}M^{-1}\Psi_1)^{-1},~
				\ROMnota{\cootrans{K}} = \Phi_1^{*}K\Phi_1,~
				\ROMnota{\cootrans{D}} = \Psi_2^{*}D\Psi_2,~\\
				\ROMnota{\cootrans{\Gamma}} &=& \Psi_2^{*}\Gamma,~
				\ROMnotaZero{\cootrans{z}} = \Psi_2^{*}z_0,
	    \end{array}
	\end{equation}
	and $\Psi_i$, $\Phi_i$, $i=1,2$ given by (\ref{eq:const_phi_psi}).
	\begin{theorem}
		\label{Th:equiv}
		Consider the \MSD system of (\ref{eq:MSD_sys}) with the reduced models 
		(\ref{eq:SO_pH_ROM})--(\ref{eq:const_MDKGamma_SO_Arnoldi}) and (\ref{eq:ROM_symp_MSD})--(\ref{equ:symph_ROM_mats}) based on the same set of $\mu_m$'s and $s_m$'s. There exists a unitary $\Theta \in {\mathbb C}^{k\times k}$ such that $ \ROMnota{z}(t) = \Theta \ROMnota{\cootrans{z}}(t)$.
	\end{theorem} 
	\begin{proof}
		Based on the construction on Theorem \ref{Th:6}, we  construct an orthogonal basis of the following Krylov subspace
		\begin{equation*}
			{\rm span}\left\{(s_1^2M+ D s_1 + K)^{-1} \SoBmat \mu_1,\ldots,(s_m^2M+ D s_m + K)^{-1} \SoBmat \mu_m\right\}.
		\end{equation*}
		and take the columns of $V_0$ equals this basis. In this way $V_0$ satisfies the conditions of the theorem, in particular,  $V_0^*V_0 =I_k$.
		
		Next we turn our attention to the construction of the $\Psi$'s and $\Phi$'s of Subsection \ref{sec:construction of matrices}. Using \eqref{eq:SO_pH_FO_form} we see that 
		\begin{equation}
			\label{eq:44}
			\begin{array}{rcl}
				(s_mI- (J_{2\FOMsize}-R){\mathcal H})^{-1} B \mu_m 
				&=&
				\left[ \begin{array}{c} (s_m^2M+ s_mD + K)^{-1} \SoBmat\mu_m\\ M (s_m^2M+ s_mD + K)^{-1} \SoBmat \mu_m \end{array}\right], \\
				{\mathcal H}(s_m I- (J_{2\FOMsize}-R){\mathcal H})^{-1} B \mu_m 
				&=&
				\left[ \begin{array}{c} K (s_m^2M+ s_mD + K)^{-1} \SoBmat \mu_m\\ (s_m^2M+ s_mD + K)^{-1} \SoBmat \mu_m \end{array}\right].
			\end{array}
		\end{equation}
		
		Take $V_1,V_2\in{\mathbb C}^{2\FOMsize \times \ROMsize}$ as two matrices that satisfy, see (\ref{eq:30}), for all $m \in \{1, \cdots,M\}$
		\begin{equation*}
			(s_m I- (J_{2\FOMsize}-R){\mathcal H})^{-1} B \mu_m\in{\rm ran} (V_1),\quad
			{\mathcal H}(s_m I- (J_{2\FOMsize}-R){\mathcal H})^{-1} B \mu_m\in{\rm ran} (V_2).
		\end{equation*}
		For the partitions of $V_1$ and $V_2$ given as follows
		\begin{equation*}
			V_1 = \left[
			\begin{array}{c}
				V_{11}\\
				V_{12}
			\end{array}
			\right],
			\quad
			V_2= \left[
			\begin{array}{c}
				V_{21}\\
				V_{22}
			\end{array}
			\right].
		\end{equation*}
		we find by applying (\ref{eq:44})  and the construction of $V_0$ that
		$V_{11} = V_0 Q_{11}$, $V_{12}=MV_0Q_{11}$, $V_{21}=KV_0Q_{22}$, and $V_{22}=V_0Q_{22}$, with $Q_{11}$ and $Q_{22}$ non-singular. 
		
		Applying these expressions to the definition of $\Psi_2$ in (\ref{eq:const_phi_psi}) we find that $\Psi_2 = \mathrm{orth}(V_{22}) = \mathrm{orth}(V_0Q_{22})$. Since $V_0^*V_0 = I_k$ this implies that $\Psi_2 = V_0 \Theta$ with $\Theta$ being unitary. 
		
		Using this, we find that the other three matrices in (\ref{eq:const_phi_psi}) are given by:
		\begin{align*}
			\Psi_1 =&\ 
			MV_0Q_{11}(\Psi_2^{*}MV_0Q_{11})^{-1}
			=
			MV_0(V_0^{*}MV_0)^{-1} \Theta,\\
			\Phi_2 =&\ KV_0Q_{22}\left( \Psi_2^{*}KV_0Q_{22}\right)^{-1}
			=
			KV_0\left( V_0^{*}KV_0\right)^{-1} \Theta,
			\\
			\Phi_1 =&\ V_0Q_{11}(\Phi_2^{*}V_0Q_{11})^{-1} = V_0 \left( \Theta^* \left( V_0^{*}KV_0\right)^{-1} V_0^{*}KV_0\right)^{-1} = V_0 \Theta .
		\end{align*}
		Applying these expressions to the model matrices of equation (\ref{equ:symph_ROM_mats}) and using (\ref{eq:const_MDKGamma_SO_Arnoldi}), we obtain
		\begin{align*}
			\ROMnota{\cootrans{K}}=&\ \Theta^{*}V_0^{*}KV_0\Theta = 
			\Theta^{*} \ROMnota{K} \Theta\\
			\ROMnota{\cootrans{D}}=&\ \Theta^{*}V_0^{*}DV_0\Theta = 
			\Theta^{*} \ROMnota{D} \Theta\\
			\ROMnota{\cootrans{\Gamma}} =&\ \Theta^{*}V_0^{*}\Gamma = 
			\Theta^{*} \ROMnota{\Gamma}\\
			\ROMnota{\cootrans{M}}
			=&\
			(\Psi_1^{*}M^{-1}\Psi_1)^{-1}\\
			=&\
			\left(
			\Theta^{*}
			(V_0^{*}MV_0)^{-1}V_0^{*}M^{*}
			M^{-1}
			\
			MV_0(V_0^{*}MV_0)^{-1}
			\Theta
			\right)^{-1}\\
			=&\
			\Theta^{*}
			\left(
			(V_0^{*}MV_0)^{-1}
			\right)^{-1}
			\Theta
			=
			\Theta^{*}
			\ROMnota{M}
			\Theta\\
			\ROMnotaZero{\cootrans{z}} =& 
			\Theta^{*}V_0^*
			z_0
			=
			\Theta^{*}
			\ROMnotaZero{z}
			.
		\end{align*}
		Substituting this in (\ref{eq:ROM_symp_MSD}) we find
		\begin{equation*}
			\begin{split}
				\Theta^* \ROMnota{M} \Theta \ROMnota{\ddot{\cootrans{z}}}(t) &\ + \Theta^*\ROMnota{K} \Theta \ROMnota{\dot{\cootrans{z}}}(t) + \Theta^*\ROMnota{D} \Theta \ROMnota{\cootrans{z}}(t) = \Theta^* \ROMnota{\cootrans{\Gamma}} u(t), \\
				\ROMnota{\cootrans{y}}(t) =&\  \ROMnota{\Gamma}^{*}\Theta \ROMnota{\dot{\cootrans{z}}} (t),
				\quad
				\ROMnotaZero{\cootrans{z}}
				=
				\Theta^*\ROMnotaZero{z}.
			\end{split}
		\end{equation*}
		Thus, $\Sigma_{r,1}$ and $\Sigma_{r,2}$ are equivalent with a coordinate transformation given by $\ROMnota{\cootrans{z}} = \Theta^*\ROMnota{z}$.
	\end{proof}
	
	\section{Numerical Results}	
	\label{sec:RLC}
	
	In this section, we investigate the numerical performance of the symplectic \MOR of \pH-system with dissipation as introduced in Subsection \ref{Subsec:w_dissipation} using the example given by an \LRCR circuit. We first introduce the problem statement of the \LRCR circuit in Subsection \ref{sec:problem_statement}. Then, in Subsection \ref{sec:SO_form}, we introduce the second-order form of this problem and finally compare in Subsection \ref{sec:final_results} the performance of the symplectic \MOR technique with the structure-preserving Arnoldi method (pH-Arnoldi), see \cite{Gugercin2012IRKA} and Theorem \ref{thm:IRKA-pH} and structure-preserving Arnoldi method of second-order systems (\SO-Arnoldi), see \cite{Bai2005} and Theorem \ref{Th:6}.
	
	\subsection{Problem statement}\label{sec:problem_statement}
	
	For the numerical example, we consider an LRCR circuit as presented in Figure \ref{fig:LRCR_circuit}.
	\begin{figure}[H]
		\centering
		\begin{circuitikz}[scale = 0.9][H]
			
			\draw (0,0) to[V=$V_0$] (0,3);
			
			\node at (0, 0) {\Large$\circ$};
			\node at (0, 3) {\Large$\circ$};
			\draw (0,3) to[R=$R_{1,1}$] (1.5,3);
			\draw (1.5,3) to[L=$L_{1}$] (3,3);
			\draw (3,3)to[R = $R_{1,2}$] (3,0);
			\draw (3,3) to (5,3);
			\draw (4.5,3)to[C = $C_{1}$] (4.5,0);
			\draw (5,0)to(0,0);
			\node at (6, 1.5) {\Large$\cdots$};
			
			\draw (6.5,3) to (7,3);
			\draw (7,3) to[R=$R_{\FOMsize
				,1}$] (8.5,3);
			\draw (8.5,3) to[L=$L_{\FOMsize}$] (10,3);
			\draw (10,3)to[R = $R_{\FOMsize,2}$] (10,0);
			\draw (10,3) to (11.5,3);
			\draw (11.5,3)to[C = $C_{\FOMsize}$] (11.5,0);
			\draw (11.5,0)to(6.5,0);
		\end{circuitikz}
		\caption{LRCR circuit}
		\label{fig:LRCR_circuit}
	\end{figure}
	For $i= 1,\ldots,\FOMsize$, we denote the voltage difference on the two sides of the $i-$th capacitor $C_i$ by $V_i$ and denote the electric current that goes through the $i-$th inductor$L_i$ by $I_i$. We arrive at the following equations based on 
	Kirchhoff's current law and Kirchhoff's voltage law
	\begin{equation}
		\label{equ:equality of each components}
		\begin{array}{rcl}
			C_1 \ddt V_1(t) &=& I_1(t) - I_2(t) - R_{1,2}^{-1}V_1(t)\\
			L_1 \ddt I_1(t) &=& V_0(t) - R_{1,1}I_1(t) -V_1(t)\\
			&\vdots&\\
			C_{i-1} \ddt V_{i-1}(t) &=& I_{i-1}(t) - I_{i}(t) - R_{i-1,2}^{-1} V_{i-1}(t)\\
			L_{i-1} \ddt I_{i-1}(t) &=& V_{i-2}(t) - R_{i-1,1}I_{i-1}(t) -V_{i-1}(t)\\
			&\vdots&\\
			C_{\FOMsize} \ddt V_{\FOMsize}(t) &=& I_{\FOMsize}(t) - R_{\FOMsize,2}^{-1}V_{\FOMsize}(t)\\
			L_{\FOMsize} \ddt I_{\FOMsize}(t) &=& V_{\FOMsize-1}(t) - R_{\FOMsize,1}I_{\FOMsize}(t) -V_{\FOMsize}(t).
		\end{array}
	\end{equation}
	For $i=1,\ldots,\FOMsize$, we denote $\phi_i$ as the magnetic flux of $L_i$ and denote $q_i$ as the charge stored in a capacitor $C_i$.
	Then, the system in \eqref{equ:equality of each components} with state given by $x=[q_1,\ldots,q_{\FOMsize},\phi_1,\ldots,\phi_{\FOMsize}]^{\top}$ 
	is
	\begin{equation*}
		\dot{x}(t) = (J_{2\FOMsize}-R){\mathcal H}x(t) + B u(t),
	\end{equation*}
	where 
	\begin{align*}
		J_{2n} &= 
		\left[
		\begin{array}{cc}
			0 & \hat{J}_{\FOMsize}\\
			-\hat{J}_n^{\top} &0
		\end{array}
		\right],
		\quad
		{\rm with}\
		\hat{J}_n\coloneqq
		\left[
		\begin{array}{ccccc}
			1& -1 &&&\\
			&1& -1 &&\\
			&& \ddots &\ddots&\\
			&&&1 &-1\\
			&&&&1  \\
		\end{array}
		\right],\\
		B&=[\underbrace{0,\ldots0}_{\FOMsize},1,0,\ldots,0]^{\top},
		\quad
		{\mathcal H}= 
		{\rm diag}(C_1^{-1},\ldots,C_{\FOMsize}^{-1},L_1^{-1},\ldots,L_{\FOMsize}^{-1}),
		\\
		R &=
		{\rm diag}(	R_{1,2}^{-1},\ldots, R_{\FOMsize,2}^{-1},R_{1,1},\ldots,R_{\FOMsize,1}).&
	\end{align*}
	Since the physical meaning of $I_1 V_0$ is the energy exchange between this system and the environment via the port of the system, it is natural to set the output as $y(t)=
	L_1^{-1}\phi_1=B^{\top}{\mathcal H}x = I_1$. Now, we arrive at a \pH system based on the \LRCR circuit as follows
	\begin{equation}
		\label{equ:sys_pH_sys}
		\begin{array}{rcl}
			\dot{x}(t) &=& (J_{2\FOMsize}-R){\mathcal H} x(t) +B u(t); 
			\\
			y(t) &=& B^{*}{\mathcal H} x(t).
		\end{array}
	\end{equation}
	
	As parameters of the system, we consider the above introduced \LRCR circuit with $\FOMsize =50$, and we set $C_i= 1$\,pF, $L_{i} = 1$\,nH, $R_{i,1} = 1\,$m$\Omega$, and $R_{i,2} = 1\,$k$\Omega$ for $i=1,2,\cdots,N$.  The Bode plot of this system is presented in Figure \ref{fig:LRCR_bode} which shows that the system has relatively higher response at the frequency below $1$\,GHz, and the behaviour of the system oscillates a lot at the frequency around $10$\,GHz.
	
	\begin{figure}[H]
		\begin{subfigure}{0.48\linewidth}
			\begin{tikzpicture}
				\begin{axis}[
					width = 0.95\linewidth,
					height = 0.6\linewidth,
					grid=major,
					xmode=log, 
					xlabel={Frequency (GHz)},
					ylabel={Mag},
					legend pos=north east,
					xmin= 0.01,
					xmax=100,
					]
					\addplot [mark=none, blue] 
					table[x index=0, y index=1, col sep=comma] 
					{bode_mag_c_1e-3_r1_1.dat};
				\end{axis}
			\end{tikzpicture}
		\end{subfigure}
		\begin{subfigure}{0.48\linewidth}
			\begin{tikzpicture}
				\begin{axis}[
					width = 0.95\linewidth,
					height = 0.6\linewidth,
					grid = major,
					xmode=log, 
					xlabel={Frequency (GHz)},
					ylabel={Phase},
					legend pos=north east,
					xmin= 0.01,
					xmax=100,
					]
					\addplot [mark=none, blue] 
					table[x index=0, y index=1, col sep=comma] 
					{bode_phase_c_1e-3_r1_1.dat};
				\end{axis}
			\end{tikzpicture}
		\end{subfigure}
		
		\caption{Magnitude (left) and phase (right) of the transfer function (\FOM) corresponds to the \LRCR circuit where $C_i= 1$\,pF, $L_{i} = 1$\,nH, $R_{i,1} = 1$\,m$\Omega$, and $R_{i,2} = 1\,$k$\Omega$.}
		\label{fig:LRCR_bode}
	\end{figure}
	
	\subsection{Second-order form of the LRCR circuit} \label{sec:SO_form}
	
	In this work, we compare the proposed algorithm with the \SO-Arnoldi method, thus, we reformulate \eqref{equ:sys_pH_sys} as a second-order system.
	Based on the Kirchhoff's current law, we have
	\begin{equation}
		\label{equ:replace_I_via_V}
		\begin{array}{rcl}
			I_1(t)  &=& \sum_{j=1}^{\FOMsize}\frac{V_j(t) }{R_{j,2}} + \sum_{j=1}^{\FOMsize}C_j\ddt V_j(t) \\
			&\vdots&\\
			I_i(t)  &=& \sum_{j=i}^{\FOMsize}\frac{V_j(t) }{R_{j,2}} + \sum_{j=i}^{\FOMsize}C_j\ddt V_j(t) \\
			&\vdots&\\
			I_{\FOMsize}(t)  &=& \frac{V_{\FOMsize}(t) }{R_{\FOMsize,2}} + C_{\FOMsize}\ddt V_{\FOMsize}(t).
		\end{array}
	\end{equation}
	By inserting \eqref{equ:replace_I_via_V} into \eqref{equ:equality of each components}, and denoting $z(t)  = [V_1(t) ,V_2(t) ,\ldots,V_{\FOMsize}(t) ]^{\top}$, we get the second-order equation
	\begin{equation}
		\label{equ:LRCR_SO_form}
		\begin{array}{rcl}
			M\ddot{z}(t) + D\dot{z}(t) + Kz(t)
			&=&
			[1,0,\ldots,0]^{\top}
			V_0(t),\\
			y(t) &=& R_2^{*} z(t) + C^{*}\dot{z}(t),
		\end{array}
	\end{equation}
	where
	\begin{equation*}
		M = {\rm triu}( L C^{\top}),\quad
		D =  {\rm triu}\left( L R_2^{\top} + R_1C^{\top}\right),\quad
		K =	\left(	{\rm triu}(	R_1R_2^{\top})+\hat{J}_{\FOMsize}^{\top}\right),
	\end{equation*}
	with $L = [L_1,L_2,\ldots,L_{\FOMsize}]^{\top}$, $R_1 = [R_{1,1},R_{2,1},\ldots,R_{\FOMsize,1}]^{\top}$, $C = [C_1,C_2,\ldots,C_{\FOMsize}]^{\top}$, $R_2 = [R_{1,2},R_{2,2},\ldots,R_{\FOMsize,2}]^{\top}$, and the operator ``${\rm triu}$" takes the upper-triangular part of a square matrix.
	It needs to be highlighted that although matrices ${\rm triu}( L C^{\top}), {\rm triu}\left( L R_2^{\top} + R_1C^{\top}\right)$ and $	\left({\rm triu}(R_1R_2^{\top})+\hat{J}_{\FOMsize}^{\top}\right)$ are not symmetric, the system given by \eqref{equ:LRCR_SO_form} is of \pH form. In this work, only the $H_2$ and $H_{\infty}$ errors are discussed. Thus, the initial condition is not needed in the computation.
	
	\subsection[Comparison of H 2 and H inf error]{Comparison of ${H_2}$ and ${H_{\infty}}$ error}
	\label{sec:final_results}
	
	Based on the Bode plot, we select interpolation points within the range $[10^{-2},10^{2}]$ as this is the interesting interval. Moreover, in this interval, we choose interpolation points uniformly in the log scale. For instance, if we need 6 interpolation points, we choose
	$[10^{-2}i, -10^{-2}i, 10^{0}i, -10^{0}i, 10^{2}i, -10^{2}i]$, where $i$ is the complex unit.\footnote{We select interpolation points in conjugate pairs, which ensures that the resulting reduced order models are real systems.} We construct \ROMs by using the \SO-Arnoldi method, the \pH-Arnoldi method and the symplectic-\pH method, with the same interpolation points. If the \ROM is stable, then its $H_2$ and $H_{\infty}$ errors are computed by
	\begin{equation*}
		\label{equ:H_2_H_inf_error}
		\begin{array}{l}
			H_2 {\rm error} (\ROMnota{G})
			=
			\sqrt{\frac{1}{2\pi}
				\int_{-\infty}^{\infty}
				{\rm Trace}
				\left(
				(G(i\omega)-\ROMnota{G}(i\omega))^{*}
				(G(i\omega)-\ROMnota{G}(i\omega))
				\right)
				d\omega
			},
			\quad
			\\
			H_{\infty} {\rm error} (\ROMnota{G})
			=
			\max_{\omega\in {\mathbb R}}
			\|G(i\omega)-\ROMnota{G}(i\omega)\|_2.
		\end{array}
	\end{equation*}
	The results are presented in Figure \ref{fig:LRCR_SO_c_1e-3_r1_1e-2}. 
	
		\begin{figure}[H]
		\centering
		\begin{subfigure}{0.48\linewidth}
			\begin{tikzpicture}
				\begin{axis}[
					width=0.95\linewidth,
					height=0.65\linewidth,
					ymode=log,
					grid=major,
					ylabel={$H_2$ error},
					xlabel={Reduced-order (r)},
					xtick={8,16,24,32},    
					ytick={0.1548, 0.25119, 0.39811, 0.63096},
					yticklabels={$10^{-0.8}$, $10^{-0.6}$, $10^{-0.4}$, $10^{-0.2}$},
					xmin=7.5, xmax=36.5,
					ymin = 0.1548, ymax= 0.63096,
					unbounded coords=jump, 
					]
					\addplot [mark=star ,blue, mark size= \MarkSize, line width=\PlotLineWidth] 
					table[x index=0, y index=1, col sep=comma] 
					{error_H2_foarnoldi.dat};
					\addplot [mark=o ,red, mark size= \MarkSize, line width=\PlotLineWidth] 
					table[x index=0, y index=1, col sep=comma] 
					{error_H2_soarnoldi.dat};
					\addplot [mark=square ,cyan, mark size= \MarkSize, line width=\PlotLineWidth] 
					table[x index=0, y index=1, col sep=comma] 
					{error_H2_symph.dat};
				\end{axis}
			\end{tikzpicture}
			\caption{$H_2$ error}
		\end{subfigure}
		\begin{subfigure}{0.48\linewidth}
			\begin{tikzpicture}
				\begin{axis}[
					width=0.95\linewidth,
					height=0.65\linewidth,
					ymode=log,
					grid=major,
					ylabel={$H_{\infty}$ error},
					xlabel={Reduced-order (r)},
					legend columns=3,
					legend entries={pH-Arnoldi, SO-Arnoldi, Symplectic-pH},
					legend style={font=\footnotesize},
					legend to name= Legend_Fig_FO_Comp,
					unbounded coords=jump, 
					xmin=7.5, xmax=36.5,
					ymin=1e-1, ymax=1.8e0,
					xtick={8,16,24,32},    
					ytick={0.1548, 0.39811, 1},
					yticklabels={$10^{-0.8}$, $10^{-0.4}$, $10^{0}$},
					]
					\addplot [mark=star ,blue, mark size= \MarkSize, line width=\PlotLineWidth] 
					table[x index=0, y index=1, col sep=comma] 
					{error_Hinf_foarnoldi.dat};
					\addplot [mark=o ,red, mark size= \MarkSize, line width=\PlotLineWidth] 
					table[x index=0, y index=1, col sep=comma] 
					{error_Hinf_soarnoldi.dat};
				    \addplot [mark=square ,cyan, mark size= \MarkSize, line width=\PlotLineWidth] 
					table[x index=0, y index=1, col sep=comma] 
					{error_Hinf_symph.dat};
				\end{axis}
			\end{tikzpicture}
			\caption{$H_{\infty}$ error}
		\end{subfigure}
		\ref{Legend_Fig_FO_Comp}
		\caption{$H_2$ (left) and $H_{\infty}$ (right) errors of \ROMs obtained by different structure-preserving \MOR methods.}
		\label{fig:LRCR_SO_c_1e-3_r1_1e-2}
	\end{figure}
	
	Compared to the \pH-Arnoldi, the symplectic-\pH provides \ROMs with lower $H_2$ and $H_{\infty}$ error in most cases. 
	Although \SO-Arnoldi and symplectic-\pH have very close performance when the reduced-order is below 24, the \SO-Arnoldi has bad performance in the $H_2$ and $H_{\infty}$ error when the reduced order is 32. Nevertheless, the reduced order system resulting from \SO-Arnoldi is stable at reduced dimension 32. However, at reduced dimensions 28 and 36, the \SO-Arnoldi method returns an unstable system, such that the corresponding values are missing in Figure \ref{fig:LRCR_SO_c_1e-3_r1_1e-2}. The poles of these two systems are shown in Figure \ref{fig:LRCR_SO_poles}.
    \begin{figure}[H]
    	\centering
    	\begin{subfigure}{0.48\linewidth}
    		\begin{tikzpicture}
    			\begin{axis}[
    				width=0.95\linewidth,
    				height=0.65\linewidth,
    				grid=none,
    				xlabel={Real},
    				ylabel={Imag},
    				legend pos=north east,
    				]
    				\addplot [only marks,  mark=* ,blue, mark size=0.5*\MarkSize] 
    				table[x index=0, y index=1, col sep=comma] 
    				{LRCR_soarnoldi_poles_c_1e-3_r1_7.dat};
    				\draw[dashed, thick, red] (axis cs:0,-100) -- (axis cs:0,100);
    			\end{axis}
    		\end{tikzpicture}
    		\caption{r =28 }
    	\end{subfigure}
    	\begin{subfigure}{0.48\linewidth}
    		\begin{tikzpicture}
    			\begin{axis}[
    				width=0.95\linewidth,
    				height=0.65\linewidth,
    				grid=none, 
    				xlabel={Real},
    				ylabel={Imag},
    				legend pos=north east,
    				]
    				\addplot [only marks,  mark=* ,blue, mark size=0.5*\MarkSize] 
    				table[x index=0, y index=1, col sep=comma] 
    				{LRCR_soarnoldi_poles_c_1e-3_r1_9.dat};
    				\draw[dashed, thick, red] (axis cs:0,-100) -- (axis cs:0,100);
    			\end{axis}
    		\end{tikzpicture}
    		\caption{r =36}
    	\end{subfigure}
    	\caption{Poles of \ROMs obtained by the \SO-Arnoldi method with reduced dimension $r=28$ (left) and $r=36$ (right).  
    				}
    	\label{fig:LRCR_SO_poles}
    \end{figure}
	
	\section{Conclusion}
	\label{sec:conclusion}
	
	In this paper, we presented symplectic-\pH, a \MOR method which, next to maintaining the \pH structure in the \ROM, is able to maintain additional block structure in the system matrices. We proved that this method gives the same \ROM as the \SO-Arnoldi method for \MSD systems. However, for the class of electrical networks the method provides a novel \MOR technique. Blindly applying the \SO-Arnoldi method might lead to an unstable system as is numerically shown in the example. 
	
	The results presented in this paper focus on interpolating the transfer function at given frequency data. To ensure a better fit between the transfer function of the \FOM and the \ROM, one aspect of future work is to match the derivatives of the transfer function.  
	
\bibliographystyle{abbrvurl}
\bibliography{references}

\end{document}